\theoremstyle{plain}
\newtheorem{thm}{Theorem}[section]
\newtheorem{prop}[thm]{Proposition}
\newtheorem{lem}[thm]{Lemma}
\newtheorem{cor}[thm]{Corollary}
\theoremstyle{definition}
\newtheorem{defn}{Definition}
\theoremstyle{remark}
\newtheorem{remark}{Remark}
\newtheorem{notation}{Notation}
\def\cA{{\mathcal A}}
\def\cB{{\mathcal B}}
\def\cc{{\curvearrowright}}
\def\D{\mathbb D}
\def\E{{\mathbb E}}
\def\EE{{\mathbb E}}
\def\F{{\mathbb F}}
\def\FF{{\mathbb F}}
\def\cF{{\mathcal F}}
\def\cI{{\mathcal I}}
\def\cL{{\mathcal L}}
\def\N{{\mathbb N}}
\def\tnu{{\tilde{\nu}}}
\newcommand\norm[1]{\left\|#1\right\|}
\def\cP{{\mathcal P}}
\def\cR{\mathcal R}
\def\cS{{\mathcal S}}
\def\cT{{\mathcal T}}
\def\chix{{\raise.5ex\hbox{$\chi$}}}
\def\Z{{\mathbb Z}}
\begin{document}
\title[Mean  convergence of Markovian spherical averages]{Mean  convergence of Markovian spherical averages for measure-preserving actions of
 the free group}
\author{Lewis Bowen}\address{University of Texas at
    Austin}
\email{lpbowen@math.utexas.edu} 
\author{Alexander Bufetov}\address{Aix-Marseille Universit{\'e}, CNRS, Centrale Marseille, I2M, UMR 7373}\address{ The Steklov Institute of Mathematics, Moscow}\address{
The Institute for Information Transmission Problems, Moscow}\address{
National Research University Higher School of Economics, Moscow}
\address{
Rice University, Houston
}
\email{bufetov@mi.ras.ru} \author{Olga Romaskevich}
\address{\'{E}cole Normale Sup\'{e}rieure de Lyon}\address{National Research University Higher School of Economics, Moscow}
 \email{olga.romaskevich@ens-lyon.fr} 
\date{}
%\author{University of Hawaii}
%\author[Lewis Bowen]{Lewis Bowen$\dagger$}
%\address{Department of Mathematics\\
%University of Hawai'i--Manoa\\
%} %one \address command per author
%\email{lpbowen@math.hawaii.edu}
%%\thanks{$\dagger$ Supported in part by NSF grants DMS-??.}
\maketitle
\begin{abstract}
Mean convergence of Markovian spherical averages is established for a
measure-preserving action of a finitely-generated free group on a probability space.  We endow the set of generators with a  generalized Markov chain and establish  the mean convergence of resulting spherical averages in this case under mild nondegeneracy assumptions on the stochastic matrix $\Pi$ defining our Markov chain. Equivalently, we establish the triviality of the tail sigma-algebra of the corresponding Markov operator. This convergence was previously known only for symmetric Markov chains, while the conditions  ensuring convergence in our paper are inequalities rather than equalities, so mean convergence of spherical averages is established for a much larger class of Markov chains.
\end{abstract}
\noindent
%{\bf Keywords}:\\
%{\bf MSC}:37A35\\

\noindent
%\tableofcontents
\section{Introduction}

Consider a finitely generated free group $\FF$ and a probability space $(X,\mu)$. 

Let $T:\FF \to \rm{Aut}(X, \mu)$ denote a homomorphism of $\FF$ into the group of measure-preserving transformations of $(X,\mu)$. We consider a finite alphabet $V$ with a labeling map $\cL:V \to \FF$. 

We will study an arbitrary Markov chain with $V$ being its set of states. That is, take a stochastic matrix $\Pi=(\Pi_{v,w})_{v,w\in V}$ with rows and columns indexed by the elements of $V$ (so $\sum_w \Pi_{v,w} = 1$ for every $v$). We assume that $\Pi$ has a stationary distribution $\nu:V \to [0,1]$ with $\nu(v)>0$ for all $v \in V$. Stationarity means that $\sum_{v\in V} \Pi_{w,v}\nu(v)= \nu(w)$ for any $w$. %We will assume without loss of generality that $\nu(v)>0$ for every $v\in V$.

Let $G=(V,E)$ denote the directed graph on $V$ with edge set 
$$E:=\{ (w,v):~\Pi_{vw}>0\}.$$
Note $(w,v)$ is the reverse of $(v,w)$ above. This is intentional. 

By a {\em directed path} in $G$ we mean a sequence  $s=(s_1,\ldots, s_n) \in V^n$ of vertices such that $(s_i,s_{i+1})\in E$ for all $i$. The length of such a path is $|s|:=n$. For any such path we denote
$$
\cL(s)=\cL(s_1)\cdots \cL(s_n) \in \FF, \quad T_s=T_{\cL(s)} \in \rm{Aut}(X,\mu), \quad \Pi_s=\Pi_{s_n s_{n-1}} \cdots \Pi_{s_2 s_1}.$$

Define spherical averages $S_n:L^1(X,\mu) \to L^1(X,\mu)$ by the formula 

\begin{equation}\label{eq:defn_spherical_averages}
S_n(\phi)(x):= \sum_{s=(s_1,\ldots, s_n)} \nu(s_n) \Pi_{s} \phi(T_s x)
\end{equation}

The goal of this paper is to prove that, under mild additional conditions on $\Pi$, there is a constant $k$ such that the averages $\frac{1}{2k} \sum_{i=0}^{2k-1} S_{n+i}$ are mean ergodic in $L^1$. To state these conditions properly, we need more notation.

\begin{notation}
 If $p \in V^k$ and $q\in V^l$ then we let $pq \in V^{k+l}$ be their concatenation. So if $p=(p_1,\ldots, p_k)$ and $q=(q_1,\ldots, q_l)$ then $pq= (p_1,\ldots p_k,q_1,\ldots, q_l)$. We let $\cL(p)=\cL(p_1)\cdots \cL(p_k) \in \FF$ denote the product of the labels.
\end{notation}

\begin{defn}\label{defn:good-graph}
A subgraph $H\subset G$ is {\em good} of order $k$ if it consists of vertices $u,w$ and directed paths $p,q,p^*, q^*$ of length $k$ so that
\begin{itemize}
\item $upw, uqw, pq^*p, qp^*q$ are directed paths in $G$
\item $\cL(p^*) = \cL(p)^{-1}$, $\cL(q^*) = \cL(q)^{-1}$
\end{itemize}
Figure \ref{pic_chain} illustrates the structure of a good subgraph. We do not require that a good subgraph be induced. 
\end{defn}

\begin{defn}
For each $v \in V$, let $\Gamma_v \le\FF$ be the subgroup generated by all elements of the form $\cL(p)$ where $pv$ is a directed path from $v$ to itself in $G$. To be more precise, the condition on $p$ is that it be a directed path of the form $p=(p_1,\ldots, p_n) \in V^n$ such that $p_1=v$ and $(p_n,v)\in E$ is an edge of $G$.
\end{defn}

\begin{defn}
We will say that $\Pi$ is {\em admissible} of order $k$ if
\begin{itemize}
\item its associated graph $G$ contains a good subgraph of order $k$,
\item $G$ is strongly connected and
\item there is some $v\in V$ such that $\Gamma_v=\FF$.
\end{itemize}
\end{defn}

\begin{figure}
\begin{center}
\includegraphics*[scale=0.7]{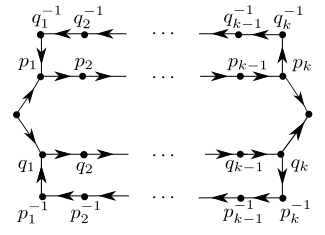}
\caption[]{A good subgraph with paths $p=(p_1, \ldots, p_k)$ and $q=(q_1, \ldots, q_k)$ from Definition \ref{defn:good-graph}. We have used the notation $p^*=(p_k^{-1},\ldots, p_1^{-1})$ and $q^*=(q_k^{-1},\ldots, q_1^{-1})$.}\label{pic_chain}
\smallskip
\end{center}
\end{figure}

\begin{thm}\label{thm:main}
Suppose $\Pi$ is admissible of order $k$. Then for any probability-measure-preserving action $\FF \cc (X,\mu)$ and any $f\in L^1(X,\mu)$
$$\frac{1}{2k} \sum_{i=0}^{2k-1} S_{n+i} f$$
converges in $L^1$ to $\E[f|\FF]$ as $n\to\infty$, where $\E[f|\FF]$ is the conditional expectation on the sigma algebra of $\FF$-invariant measurable subsets. 
\end{thm}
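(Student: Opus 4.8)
The plan is to realize the averaged spherical operators as matrix coefficients of powers of a single Markov operator and reduce the theorem to the analysis of that operator's peripheral (unimodular) spectrum, the control of which is where the good subgraph enters.

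\emph{Step 1 (Markov operator).} Let $\check\Pi_{vw}:=\nu(w)\Pi_{wv}/\nu(v)$ be the $\nu$-reversed chain; stationarity gives $\sum_w\check\Pi_{vw}=1$ and $\check\Pi_{vw}>0$ exactly when $(v,w)\in E$. On $L^2(V\times X,\nu\otimes\mu)$ define
\[(\mathsf P F)(v,x):=\sum_{w}\check\Pi_{vw}\,F\big(w,T_{\cL(v)}x\big).\]
Then $\mathsf P$ is positive, $\mathsf P\mathbf 1=\mathbf 1$, and $\mathsf P$ preserves $\nu\otimes\mu$, so $\mathsf P$ is a simultaneous contraction of $L^1$, $L^2$, $L^\infty$. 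Iterating and telescoping the ratios $\nu(w)/\nu(v)$ shows that, with the lift $\iota f(v,x):=f(x)$ and the projection $\iota^{\ast}F(x):=\sum_v\nu(v)F(v,x)$, one has $S_n f=\iota^{\ast}\mathsf P^{n}\iota f$ up to the standard reversal of the word $\cL(s_1)\cdots\cL(s_n)$, which does not change the $\FF$-invariant $\sigma$-algebra. Hence it suffices to prove that $\tfrac1{2k}\sum_{i=0}^{2k-1}\mathsf P^{n+i}\to Q$ strongly on $L^2$ as $n\to\infty$, where $Q$ is the orthogonal projection onto $\ker(\mathsf P-I)$, together with the identification $\iota^{\ast}Q\iota=\E[\,\cdot\mid\FF]$; the passage from $L^2$ to $L^1$ is routine, using $\|S_n\|_{L^1\to L^1}\le 1$ and density.

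\emph{Step 2 (window average via peripheral spectrum).} As a Hilbert-space contraction, $\mathsf P$ admits a Jacobs--de Leeuw--Glicksberg splitting $L^2=H_{\mathrm{rev}}\oplus H_{\mathrm{fl}}$, where $H_{\mathrm{rev}}$ is the closed span of the unimodular eigenfunctions. The theorem will follow once we show (i) every unimodular eigenvalue $\lambda$ of $\mathsf P$ satisfies $\lambda^{2k}=1$, and (ii) there is no unimodular spectrum outside these finitely many eigenvalues, so that $\tfrac1{2k}\sum_{i=0}^{2k-1}\mathsf P^{n+i}\to 0$ strongly on $H_{\mathrm{fl}}$. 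Indeed on $H_{\mathrm{rev}}$ the scalar average $\tfrac1{2k}\sum_{i=0}^{2k-1}\lambda^{n+i}$ equals $1$ for $\lambda=1$ and vanishes for every other $2k$-th root of unity, being a geometric sum over a full period, so exactly the eigenvalue-$1$ part survives. This is the operator form of the triviality of the tail $\sigma$-algebra asserted in the abstract, and it reduces everything to the two structural claims below.

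\emph{Step 3 (the good subgraph forces $\lambda^{2k}=1$; main obstacle).} Let $\mathsf P F=\lambda F$ with $|\lambda|=1$, $F\ne 0$. Applying the conditional Jensen inequality $|\mathsf P F|^2\le \mathsf P(|F|^2)$ and integrating shows equality holds, so $|F|$ is $\mathsf P$-invariant and the triangle inequality in $\lambda F(v,\cdot)=\sum_w\check\Pi_{vw}F(w,T_{\cL(v)}\cdot)$ is saturated: for every edge $(v,w)\in E$ one gets the pointwise cocycle relation $F\big(w,T_{\cL(v)}x\big)=\lambda\,F(v,x)$ a.e. Propagating this along the two directed paths $upw$ and $uqw$ yields $F(w,T_{g}x)=F(w,x)$ a.e.\ with $g=\cL(p_k)\cdots\cL(p_1)\cdot\big(\cL(q_k)\cdots\cL(q_1)\big)^{-1}$, while running once around the length-$2k$ closed walk supplied by $pq^{\ast}p$ gives $F(p_1,T_{g'}x)=\lambda^{2k}F(p_1,x)$; here the label conditions $\cL(p^{\ast})=\cL(p)^{-1}$, $\cL(q^{\ast})=\cL(q)^{-1}$ are exactly what make $g'$ a conjugate of $g$ rather than an uncontrolled element. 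Transporting the $T_g$-invariance to the base-point $p_1$ along a path furnished by strong connectivity, the $X$-shift cancels and forces $\lambda^{2k}=1$; the same equality analysis, applied across the strongly connected graph, also excludes unimodular spectrum off the point spectrum, giving (ii). Making this cocycle/equality-case argument rigorous, and combining the four path relations to extract precisely $\lambda^{2k}=1$, is the step I expect to be the crux.

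\emph{Step 4 (identifying the limit).} For $\lambda=1$ the cocycle relation reads $F(v,T_{\cL(v)}x)=F(v,x)$; iterating around every return loop at a vertex $v$ with $\Gamma_v=\FF$ makes $F(v,\cdot)$ invariant under $T_g$ for all $g\in\FF$, and strong connectivity forces $F$ to be independent of $v$. Thus $\ker(\mathsf P-I)=\iota\big(L^2(X)^{\FF}\big)$, whence $\iota^{\ast}Q\iota=\E[\,\cdot\mid\FF]$ and the theorem follows.
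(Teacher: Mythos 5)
The fatal gap is in Step 2, not in Step 3. For a Hilbert-space contraction, the Jacobs--de Leeuw--Glicksberg splitting gives, on the flight part, only weak convergence to zero (along density-one subsequences); it never by itself gives norm convergence of any averages. To conclude that $\tfrac1{2k}\sum_{i=0}^{2k-1}\mathsf{P}^{n+i}\to 0$ strongly on $H_{\mathrm{fl}}$ you must exclude \emph{all} unimodular spectrum there, not merely unimodular eigenvalues: if, for instance, the unitary part of $\mathsf{P}$ in its Sz.-Nagy--Foias decomposition had Lebesgue spectrum, then for a suitable flight vector $F$ one has $\bigl\|\tfrac1{2k}\sum_{i=0}^{2k-1}\mathsf{P}^{n+i}F\bigr\|_2=(2k)^{-1/2}\|F\|_2$ for every $n$, even though $\mathsf{P}^nF\to 0$ weakly. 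Your proposed tool for ruling this out --- ``the same equality analysis \dots excludes unimodular spectrum off the point spectrum'' --- cannot work: the equality case of the conditional Jensen/Cauchy--Schwarz inequality is a rigidity statement about \emph{exact} eigenfunctions, and continuous peripheral spectrum only supplies approximate eigenvectors, for which the inequality is only approximately saturated and no pointwise cocycle relation follows; there is no spectral gap here, and no a priori countability of $\sigma(\mathsf{P})\cap\mathbb{T}$ that would let you invoke an Arendt--Batty--Lyubich--V\~{u} type theorem. This is precisely why the paper does not argue spectrally: it exhibits $(\Pi_X^n\varphi)\circ\pi\circ\sigma_X^n$ as a reverse martingale $\EE[\varphi\pi\,|\,\cF^X_{\ge n+1}]$ (Rota's ``Alternierende Verfahren''), which converges in $L^1$ for \emph{every} $\varphi$ regardless of spectral type, and then identifies the limit via Theorem \ref{thm:erg} (the rewiring maps $\omega_n,\psi_n$, whose eigenfunction shadow is your Step 3), Lemma \ref{lem:period} and Proposition \ref{prop:kakutani} (whose eigenfunction shadow is your Step 4). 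Your Steps 3 and 4 are essentially sound as far as they go --- zero conditional variance does give $F(w,T_{\cL(v)}x)=\lambda F(v,x)$ along edges, and combining this relation along $upw$, $uqw$, $pq^*p$ and $pw$ does force $\lambda^{2k}=1$ --- but they control only the point spectrum, which is not enough to run Step 2.

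A second, fixable but genuine, defect is in Step 1: your $\mathsf{P}$ encodes the wrong averages. Computing $\iota^{*}\mathsf{P}^{n}\iota f$ yields $\sum_{s}\nu(s_n)\Pi_s\, f(T_{\cL(s_n)\cdots\cL(s_1)}x)$, i.e.\ spherical averages with \emph{reversed} words, which are exactly the averages $S_n$ of the time-reversed chain $\check\Pi$. Admissibility is not manifestly reversal-invariant: under reversal the subgroup $\Gamma_v$ is replaced by the subgroup generated by the reversed products $\cL(p_n)\cdots\cL(p_1)$ over loops $p$ at $v$, which need not coincide with $\Gamma_v$, and the hypothesis $\cL(p^*)=\cL(p)^{-1}$ (a condition on products, not letter-by-letter) need not survive reversal either. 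Saying the reversal ``does not change the $\FF$-invariant $\sigma$-algebra'' misses the point: what changes is which group elements your cocycle relations in Steps 3--4 produce, hence which subgroup you can prove acts trivially on eigenfunctions. The fix is to work with the correctly oriented operator, namely the paper's $\Pi_X(\varphi)(v,x)=\sum_{w}\Pi_{w,v}\varphi(w,T_vx)$, for which $S_n\phi=\sum_v\nu(v)(\Pi_X^{n-1}\iota\phi)(v,\cdot)$ holds with the original word order; but even after this repair, the missing control of continuous unimodular spectrum described above remains, and that is where the proposal fails.
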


\begin{remark}
Note that the conditions on $\Pi$ depend only on which entries are positive and which are zero. In particular, no relations are assumed between the entries of the Markov chain.
\end{remark}

In practice, it is a straightforward task to check whether $\Pi$ is admissible. We note for example, the following special case:

\begin{prop}\label{prop:sufficient}
Suppose $V$ is finite and $\cL:V \to \F$ is injective, so that we may identify $V$ as a subset of $\F$. Also suppose $G$ is strongly connected and for every $(a,b)\in E$, $(b^{-1},a^{-1}) \in E$ where the inverse is taken in the group $\F$. If there exist $v,w,u \in V$ such that $(v,w), (u,w), (u,v^{-1}) \in E$ (see Figure \ref{pic_condition}) then $G$ contains a good subgraph. So if there is some $v\in V$ such that $\Gamma_v=\FF$ then the conclusion to Theorem \ref{thm:main} holds.
\end{prop}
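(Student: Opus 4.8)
The plan is to exhibit a \emph{good} subgraph of order $k$ for a suitable $k$: this is the entire substance of the proposition, since granting it together with strong connectedness (a hypothesis) and the existence of some $v$ with $\Gamma_v=\FF$ (the extra assumption in the last sentence), $\Pi$ is admissible of order $k$ and Theorem \ref{thm:main} applies verbatim. Throughout I use the identification of $V$ with a subset of $\FF$, so that $\cL$ is the identity on vertices; then for $p^*=(p_k^{-1},\dots,p_1^{-1})$ one has $\cL(p^*)=\cL(p)^{-1}$ automatically, and likewise for $q^*$. Thus the labeling requirements of Definition \ref{defn:good-graph} come for free, and only the four ``directed path in $G$'' conditions remain to be checked.

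First I would record what those conditions demand. Write the good subgraph with start vertex $a$, end vertex $b$, and walks $p=(p_1,\dots,p_k)$, $q=(q_1,\dots,q_k)$, with $p^*,q^*$ their reverse-inverses. The symmetry hypothesis $(x,y)\in E\Rightarrow (y^{-1},x^{-1})\in E$ makes $p^*,q^*$ automatically directed paths whenever $p,q$ are, and makes the edge requirements for $qp^*q$ follow from those for $pq^*p$. So the whole definition reduces to the six edge memberships $(a,p_1),(a,q_1),(p_k,b),(q_k,b)\in E$ together with the two crossing edges $(p_k,q_k^{-1}),(q_1^{-1},p_1)\in E$.

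Next I would make the choice dictated by the three hypothesis edges $(v,w),(u,w),(u,v^{-1})$ and their symmetric partners $(w^{-1},v^{-1}),(w^{-1},u^{-1}),(v,u^{-1})$. Set $a=w^{-1}$ and $b=w$, let $p$ be any directed path from $v^{-1}$ to $v$, and let $q$ be any directed path from $u^{-1}$ to $u$. Then $(a,p_1)=(w^{-1},v^{-1})$, $(a,q_1)=(w^{-1},u^{-1})$, $(p_k,b)=(v,w)$ and $(q_k,b)=(u,w)$ are all edges, while the two crossing edges are $(p_k,q_k^{-1})=(v,u^{-1})$ and $(q_1^{-1},p_1)=(u,v^{-1})$, both present. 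Hence every condition holds, and the only thing left to arrange is that $p$ and $q$ can be taken of the same length $k$.

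This length matching is the one genuine obstacle, and is where I expect the only real work. Since $G$ is strongly connected, walks between prescribed endpoints of prescribed large length exist exactly when the length has the correct residue modulo the period $d$ of $G$, via the cyclic decomposition $V=V_0\sqcup\cdots\sqcup V_{d-1}$ in which every edge runs from $V_i$ to $V_{i+1}$; write $\mathrm{class}(x)=i$ when $x\in V_i$. I would read off the classes from the four relevant edges: $(v,w)$ and $(u,w)$ force $\mathrm{class}(v)=\mathrm{class}(u)$, while $(u,v^{-1})$ and $(v,u^{-1})$ give $\mathrm{class}(v^{-1})=\mathrm{class}(u)+1$ and $\mathrm{class}(u^{-1})=\mathrm{class}(v)+1$. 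Consequently a $v^{-1}\to v$ walk and a $u^{-1}\to u$ walk carry the same edge-length residue $\mathrm{class}(v)-\mathrm{class}(v^{-1})=\mathrm{class}(u)-\mathrm{class}(u^{-1})\equiv-1\pmod d$, so for every sufficiently large $k$ with $k\equiv 0\pmod d$ both a $p$ and a $q$ with exactly $k$ vertices exist. Fixing such a $k$ completes the good subgraph, and with it the proposition.
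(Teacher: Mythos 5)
Your proof is correct, and at the top level it follows the same strategy as the paper's: manufacture the good subgraph from the three hypothesis edges, their images under the symmetry $(a,b)\mapsto (b^{-1},a^{-1})$, and two equal-length connecting paths supplied by strong connectedness. Two differences are worth recording. The first is cosmetic: the paper anchors the good subgraph at the hypothesis vertices $u,w$ themselves and takes $p$ from $w$ to $v$ and $q$ from $v^{-1}$ to $u$, whereas you anchor at $w^{-1},w$ and take $p$ from $v^{-1}$ to $v$ and $q$ from $u^{-1}$ to $u$; either choice passes the same edge checks. The second is substantive: the paper settles the length-matching by asserting that strong connectedness and finiteness alone yield a single $k$ such that \emph{every} ordered pair of vertices is joined by a directed path of length $k$. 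That assertion holds only for aperiodic graphs (a directed $2$-cycle already refutes it), and aperiodicity does not follow from the hypotheses of the proposition: one can build a strongly connected, symmetry-closed graph on $\{v,u,w,v^{-1},u^{-1},w^{-1}\}$ containing the three required edges that is bipartite, hence of period $2$. The paper's construction survives only because the two ordered pairs it actually uses, $(w,v)$ and $(v^{-1},u)$, have the same class-difference modulo the period --- which is exactly the cyclic-class computation you carry out for your pairs $(v^{-1},v)$ and $(u^{-1},u)$. So your treatment of the one delicate point is the rigorous one: it repairs a small but genuine gap in the paper's own argument, at the modest cost of taking $k$ large and divisible by the period $d$ rather than the paper's (unjustified) universal $k$.
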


\begin{figure}
\begin{center}
\includegraphics*[scale=0.7]{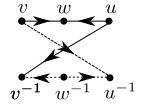}
\caption[]{Sufficient condition for a graph $G$ to contain a good subgraph, see Proposition \ref{prop:sufficient}}\label{pic_condition}
\smallskip
\end{center}
\end{figure}

\begin{proof}
Note $(v,u^{-1}),  (w^{-1}, v^{-1}) \in E$. Because $G$ is strongly connected and finite, there exists a $k$ so that for any ordered pair of vertices of $G$ there exists a directed path between them of length $k$. In particular there exists a directed path $p:=(p_1,\ldots, p_k)$ from $p_1:=w$ to $p_k:=v$ and a directed path $q:=(q_1,\ldots, q_k)$ from $q_1:=v^{-1}$ to $q_k:=u$. It is now elementary to check that $upw, uqw, pq^*p, qp^*q$ are directed paths in $G$ where $p^*$ is the unique directed path in $G$ with $\cL(p^*)=\cL(p)^{-1}$. 
\end{proof}

\subsection{Historical remarks.}

For two rotations of a sphere, convergence of spherical averages was established by Arnold and Krylov \cite{ArnKr}, and a general mean ergodic theorem for actions of free groups was proved by Guivarc'h \cite{Guivarch}.

A first general pointwise ergodic theorem for convolution averages on a countable group is due to Oseledets \cite{Oseled} who 
relied on the martingale convergence theorem.

First general pointwise ergodic theorems for free semigroups and groups were given 
by R.I. Grigorchuk in 1986 \cite{Grig86}, where the main result is Ces{\`a}ro convergence of spherical averages 
for measure-preserving actions of a  free semigroup and group.  Convergence of the spherical averages themselves was established by Nevo \cite{Nevo94} for functions in $L_2$ and Nevo and Stein \cite{NeSt} for functions in $L_p$, $p>1$ using deep spectral theory methods.
Whether uniform spherical averages of an integrable function under the action of a free group converge almost surely remains an open problem (it is tempting to speculate that a counterexample might be possible along the lines of Ornstein's example \cite{Ornst}).
The method of Markov operators in the proof of  ergodic theorems for actions of free semigroups and groups 
was suggested by R. I. Grigorchuk \cite{Grig99}, J.-P. Thouvenot (oral communication), and in \cite{Buf00}.
In \cite{Buf02} pointwise convergence is proved for Markovian spherical averages under the additional assumption that 
the Markov chain be reversible. The key step in \cite{Buf02} is the triviality of the tail sigma-algebra for the corresponding 
Markov operator; this is proved using Rota's ``Alternierende Verfahren'' \cite{Rota}, that is to say, martingale convergence. 
The reduction of powers of the Markov operator to Rota's ``Alternierende Verfahren''  in \cite{Buf02} essentially relies on the 
reversibility of the Markov chain. In this paper, we show that the triviality of the tail sigma-algebra still holds under much milder assumptions on the underlying  chain. 

The study of Markovian averages is  motivated by the problem of ergodic theorems for general countable groups, specifically, for groups admitting a Markovian coding such as Gromov hyperbolic groups \cite{Gromov} (see e.g. Ghys-de la Harpe \cite{GhysDelaharpe} for a detailed discussion of the Markovian coding for Gromov hyperbolic groups). First results on convergence of spherical averages for Gromov hyperbolic groups, obtained  under strong exponential mixing assumptions on the action, are due to Fujiwara and Nevo \cite{FuNe}. For actions of hyperbolic groups on finite spaces, an ergodic theorem was obtained by Bowen in \cite{Bowen}.

Ces{\`a}ro convergence of spherical averages for all measure-preserving actions of Markov semigroups, and, in particular, Gromov hyperbolic groups, was established by Bufetov, Klimenko and Khristoforov in \cite{BKK}. In the special case of hyperbolic groups, a short and very elegant proof of this theorem, using the method of Calegari and Fujiwara \cite{CalFuji}, was later given by Pollicott and  Sharp \cite{PS}.
Using the method of amenable equivalence relations, Bowen and Nevo \cite{BowNe4953},
\cite{BowNe4426}, \cite{BowNe4109}, \cite{BowNe3569} established ergodic theorems for 
``spherical shells''  in Gromov hyperbolic groups. The latter do not require any mixing assumptions.

\subsection{Examples}

\subsubsection{Uniform spherical averages}
Consider the special case in which $\F=\langle a_1,\ldots, a_r\rangle$ and $V=\{a_1,\ldots, a_r\} \cup \{a_1^{-1},\ldots, a_r^{-1}\} \subset \F$. We let $\cL:V \to \F$ be the inclusion map and $\Pi_{a,b} = \frac{1}{2r-1}$ if $a \ne b^{-1}$, $\Pi_{a,b}=0$ otherwise. We let $\nu$ be the stationary distribution that is uniformly distributed on $V$. In this case, $\Pi$ is admissible of order 1 and $S_n$ is the uniform average on the sphere of radius $n$ centered at the identity in $\F$. That is, 
$$S_n(\phi)(x) = |\{g\in \F:~|g|=n\}|^{-1} \sum_{|g|=n} \phi(T_g x)$$
for $\phi \in L^1(X,\mu)$ and $x \in X$. So Theorem \ref{thm:main} proves the mean ergodic theorem for the averages $\frac{S_n+S_{n+1}}{2}$. This result was first obtained by Guivarc'h \cite{Guivarch}. 
%\end{example}

\subsubsection{A surface group example}
Let $\Lambda=\langle a,b,c,d| [a,b][c,d]=1\rangle$ denote the fundamental group of the closed genus 2 surface. There is a natural Markov coding of this group, developed by Bowen-Series \cite{bowen-series}, that was used in \cite{BufSer} to prove a pointwise ergodic theorem for Ces\`aro averages of spherical averages (with respect to the word metric on this group). Using this coding and Theorem \ref{thm:main} we will show: 
\begin{cor}\label{cor:surface}
There exists a sequence $\pi_n$ of probability measures on $\Lambda$ such that
\begin{itemize}
\item $\pi_n$ is supported on the union of the spheres of radius $n$ and radius $n+1$ centered at the identity in $\Lambda$ (with respect to the word metric);
\item $\pi_n$ is mean ergodic in $L^1$ in the sense that: if $\Lambda \cc (X,\mu)$ is any probability-measure-preserving action and $f \in L^1(X,\mu)$ then the averages $\pi_n(f) \in L^1(X,\mu)$ defined by
$$\pi_n(f)(x)=\sum_{g\in \Lambda} \pi_n(g)f(g^{-1}x)$$
converge in $L^1(X,\mu)$ to $\E[f|\Lambda]$, the conditional expectation of $f$ on the sigma-algebra of $\Lambda$-invariant subsets.
\end{itemize}
\end{cor}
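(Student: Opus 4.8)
The plan is to deduce Corollary \ref{cor:surface} from Theorem \ref{thm:main} by feeding the Bowen--Series coding of $\Lambda$ into the Markovian framework. Write $\FF=F(a,b,c,d)$ for the free group on the generators and let $q\colon\FF\to\Lambda$ be the canonical surjection with kernel the normal closure of $[a,b][c,d]$. Given a p.m.p.\ action $\Lambda\cc(X,\mu)$, I would first pull it back along $q$ to a p.m.p.\ action $\FF\cc(X,\mu)$, setting $T_g$ to be the action of $q(g)$. Since $q$ is onto, the image of $\FF$ in $\mathrm{Aut}(X,\mu)$ equals that of $\Lambda$, so a set is $\FF$-invariant iff it is $\Lambda$-invariant; hence $\E[f\mid\FF]=\E[f\mid\Lambda]$, which already identifies the two limits. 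Next I would take the Bowen--Series geodesic automaton from \cite{bowen-series,BufSer}: a finite directed graph $G=(V,E)$ with a labelling $\cL\colon V\to\{a^{\pm1},b^{\pm1},c^{\pm1},d^{\pm1}\}$ whose length-$n$ directed paths enumerate geodesic normal forms for the elements of $\Lambda$ at distance $n$ from the identity. Choosing any stochastic matrix $\Pi$ that is positive exactly on the edges of $G$ (with its Perron--Frobenius stationary vector $\nu>0$, available once $G$ is strongly connected) turns $S_n$ into an averaging operator over the sphere of radius $n$ in $\Lambda$: indeed $S_n\mathbf1=\mathbf1$, and for a path $s$ the map $T_s$ is the action of $q(\cL(s))$ with $|q(\cL(s))|_\Lambda=n$.

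The core of the argument is to verify that such a $\Pi$ is \emph{admissible} of some order $k$. Strong connectivity of $G$ is a standard feature of the Bowen--Series coding. For the good subgraph I would invoke Proposition \ref{prop:sufficient}: the coding enjoys the time-reversal symmetry $(a,b)\in E\Rightarrow(b^{-1},a^{-1})\in E$, and it then remains to exhibit three states $u,v,w$ with $(v,w),(u,w),(u,v^{-1})\in E$, which is a finite inspection of the explicit genus-$2$ automaton. This produces a good subgraph of the order $k$ furnished by strong connectivity (a common length for directed paths between all ordered pairs of vertices).

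The step I expect to be the main obstacle is the last admissibility condition, $\Gamma_v=\FF$ for some $v$: one must show that the labels of directed loops at $v$, read as \emph{reduced words in $\FF$}, generate the whole rank-$4$ free group. This is strictly stronger than the automaton surjecting onto $\Lambda$ --- loop labels automatically generate $\Lambda$ as a group, but generation inside $\FF$ can fail, e.g.\ if every loop label happened to lie in a proper subgroup such as the kernel of some homomorphism $\FF\to\Z$. My plan is to use strong connectivity to manufacture, at a fixed base vertex $v$, loops whose labels, together with their products and inverses, realize each of the four generators as a reduced word in $\FF$: for each generator $g$ one chooses a geodesic beginning with $g$, closes it up to $v$, and cancels the extraneous tail against a second loop, so that the subgroup generated contains $a,b,c,d$ and hence equals $\FF$. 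Making this cancellation precise within the specific transition rules of the Bowen--Series automaton is the delicate part.

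Granting admissibility of order $k$, Theorem \ref{thm:main} applies to the pulled-back action and gives $\frac{1}{2k}\sum_{i=0}^{2k-1}S_{n+i}f\to\E[f\mid\FF]=\E[f\mid\Lambda]$ in $L^1$. I would then define $\pi_n$ to be the probability measure on $\Lambda$ representing this operator, with $\pi_n(g)$ the total $\nu\Pi$-weight of the coding path $s$ satisfying $q(\cL(s))=g^{-1}$, averaged over the $2k$ consecutive radii. By the first paragraph $\pi_n$ is supported on the union of the spheres of radii $n,\dots,n+2k-1$, and $\pi_n(f)(x)=\sum_{g}\pi_n(g)f(g^{-1}x)$ converges to $\E[f\mid\Lambda]$ as required. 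The two-sphere form stated in the corollary is precisely the case $k=1$ (as in the uniform-average example, where $\Pi$ is admissible of order $1$ and one recovers $\tfrac12(S_n+S_{n+1})$); in general the only remaining bookkeeping is to record the exact order $k$ output by the coding and, if desired, to regroup the $2k$ shells into consecutive pairs.
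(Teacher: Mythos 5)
Your overall strategy---feed the Bowen--Series coding into Theorem \ref{thm:main}, pull the action back along $\F\to\Lambda$, and use Lemma \ref{lem:series} to identify paths with geodesics---is exactly the paper's route. But two of your three admissibility verifications have genuine problems. First, you cannot invoke Proposition \ref{prop:sufficient} for the good subgraph: that proposition assumes $\cL:V\to\F$ is \emph{injective}, so that $V$ is identified with a subset of $\F$ and conditions like $(u,v^{-1})\in E$ make sense. In the Bowen--Series automaton the vertex set is $V=\cI$, the $48$ boundary intervals, labelled by only the $8$ generators; $\cL$ is far from injective and ``$v^{-1}$'' is not a vertex. Second, even if one repaired this, Proposition \ref{prop:sufficient} only produces a good subgraph of order $k$ equal to the common-path-length constant coming from strong connectivity, which is strictly larger than $1$ here. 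Theorem \ref{thm:main} would then give convergence of $\frac{1}{2k}\sum_{i=0}^{2k-1}S_{n+i}$, i.e.\ measures supported on $2k$ consecutive spheres --- and this does \emph{not} imply the corollary's statement about two consecutive spheres. Your closing remark that one can ``regroup the $2k$ shells into consecutive pairs'' is not a valid step: mean convergence of the average of $2k$ consecutive spherical averages says nothing about convergence of $\tfrac12(S_n+S_{n+1})$. The paper gets the two-sphere conclusion only because it exhibits a good subgraph of order exactly $1$ by hand, using the special intervals $I_s\subset L(s)\setminus\bigcup_{t\ne s}L(t)$: take $u=w=I_a$, $p=(I_a)$, $q=(I_c)$, $p^*=(I_{a^{-1}})$, $q^*=(I_{c^{-1}})$, and check the four required paths against the explicit edge rule ($(I_s,I_t)\in E$ iff $t\ne s^{-1}$ and $I_t$ is not adjacent to $I_{s^{-1}}$).

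Third, the condition $\Gamma_v=\F$, which you correctly flag as the main obstacle, is left in your write-up as a plan (``manufacture loops \dots the delicate part'') rather than a proof; and your proposed cancellation scheme is vague enough that it could not be checked as written. The paper settles it by direct inspection of the same edge rule: at $v=I_a$ there is a loop labelled $a$, and paths $(I_a,I_c)$, $(I_a,I_d,I_c)$, $(I_a,I_d,I_b)$ closing up at $I_a$, so $\Gamma_v$ contains $a$, $ac$, $adc$, $adb$, and these four elements generate $\F_4$. So while your high-level architecture is right, the two concrete verifications that constitute the actual content of the paper's proof (order-$1$ good subgraph and $\Gamma_v=\F$) are respectively misattacked and missing, and without order $1$ your argument proves a strictly weaker statement than the corollary.
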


To explain the coding, let $\cR$ denote a regular octogon in the hyperbolic plane (which we identify with $\D$ the unit disk in the complex plane) with all interior angles equal to $\pi/4$. This is a fundamental domain for an action of $\Lambda$ on $\D$ by isometries. It can be arranged that if $S=\{a,b,c,d,a^{-1},b^{-1},c^{-1},d^{-1}\}$ then $\cR \cap s\cR$ is an edge of $\cR$ for any $s\in S$. 

Let $\cT=\cup_{g\in \Lambda} g\partial \cR$ be the union of the boundaries of $\Lambda$-translates of $\cR$. We may think of $\cT$ as a union of bi-infinite geodesics. Let $\cP\subset \partial\D$ denote the collection of endpoints of those geodesics in $\cT$ which meet $\cR$ (crucially this includes lines which meet $\partial \cR$ only in a vertex of $\cR$). The points $\cP$ partition $\partial \D-\cP$ into connected open intervals; we denote the collection of all these intervals by $\cI$. See figure \ref{fig:surface}.

\begin{figure}
\begin{center}
\includegraphics*[scale=0.4]{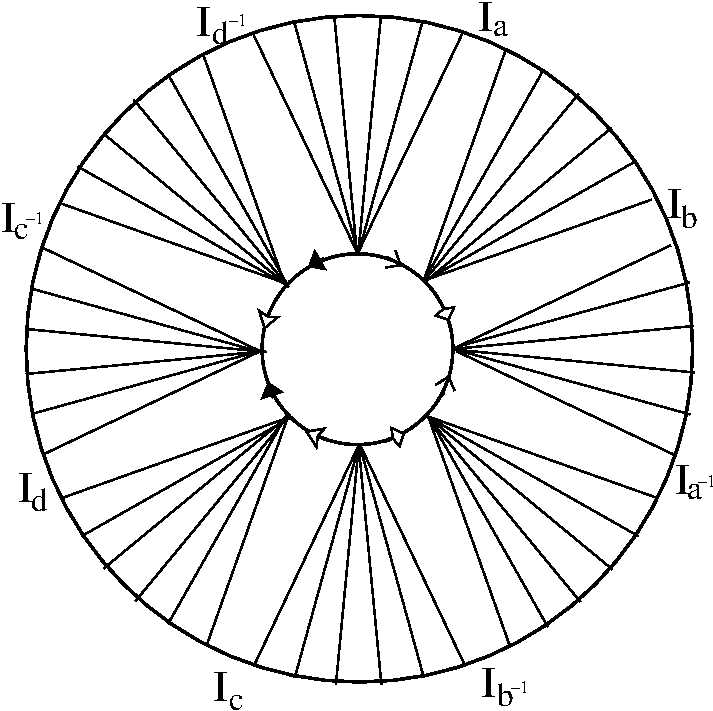}
\caption[]{This is a distorted view of the region $\cR$ in the hyperbolic plane together with all of the geodesics of the tesselation $\cT$ incident to $\cR$. Every interior angle incident to the inner circle in this diagram is $\pi/4$. There are 48 intervals in $\cI$ total. Only 8 special intervals are labeled.}\label{fig:surface}
\smallskip
\end{center}
\end{figure}

 %Note $|\cP|=48=$ since it contains 8-2=6 points for every vertex of the octagon $\cR$.

For $s\in S$, consider the edge $\cR \cap s\cR$. This edge is contained in a bi-infinite geodesic that separates the hyperbolic plane into two half-spaces. Let $L(s)$ denote the open arc of $\partial \D$ bounding the half space that contains $s\cR$. For each $I\in \cI$ let $s_I \in S$ be an element such that $I \subset L(s_I)$. For each $I$ there are either one or two choices for $s_I$. Define $f:\partial \D \to\partial \D$ by $f(x)=s_I^{-1}x$ for $x\in I$. As observed in \cite{bowen-series, series}, the map $f$ is Markov in the sense that for any $J \in \cI$, $f(I) \cap J \ne \emptyset$ implies $f(I) \supset J$. 

Let $V=\cI$, $E=\{ (I,J) \in V\times V:~f(I) \supset J\}$, $G=(V,E)$ be the associated directed graph, $\F=\langle a,b,c,d\rangle$ be the rank 4 free group, and $\cL:V \to \F$ be the map $\cL(I)=s_I$. We extend $\cL$ to the set of all finite directed paths in $G$ as explained in the introduction. In \cite[Theorem 5.10 and Corollary 5.11]{series} (see also \cite[Theorem 2.8]{birman-series}), the following is proven:
\begin{lem}\label{lem:series}
Let $\pi:\F \to \Lambda$ be the canonical surjection $\pi(s)=s$ for $s\in S$. Then for every $g\in \Lambda - \{e\}$ there is a unique element $w \in \F$ such that (a) $\pi(w)=g$ and (b) there exists some directed path $p$ in $G$ such that $\cL(p)=w$. Moreover, the word length of $w$ is the word length of $g$.
\end{lem}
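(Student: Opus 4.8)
The plan is to reformulate the statement geometrically in terms of cutting sequences of geodesics through the tessellation $\cT$, and to build a dictionary between directed paths in $G$ and such cutting sequences. Fix a basepoint $O$ in the interior of $\cR$. For $g \in \Lambda - \{e\}$ I would consider the hyperbolic geodesic segment $\gamma$ from $O$ to $gO$, perturbed if necessary so that it avoids every vertex of the tessellation. This segment crosses a finite sequence of edges of $\cT$, passing successively through octagons $\cR = R_0, R_1, \ldots, R_n = g\cR$ with $R_i \cap R_{i+1}$ an edge; writing $R_i = s_1 \cdots s_i \cR$ with $s_i \in S$ records a word $w := s_1 \cdots s_n$ whose image in $\Lambda$ is $g$. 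The first key point is that the $\pi/4$ angle condition on $\cR$ forces this cutting sequence to be geodesic: a geodesic segment cannot re-enter an octagon it has just left, so there is no adjacency $s_{i+1} = s_i^{-1}$. Hence $w$ is reduced in $\F$ and $n = |g|$, giving the length claim.

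Next I would connect the cutting sequence to the boundary map $f$ and thence to $G$. Extend $\gamma$ to a ray and let $\xi \in \p\D$ be its forward endpoint, so $\xi$ lies in the arc $L(s_1)$ and therefore in some interval $I_0 \in \cI$ with $s_{I_0} = s_1$. The action of $f$ on $I_0$ is multiplication by $s_1^{-1}$, which carries $\xi$ to the forward endpoint of $s_1^{-1}\gamma$; the latter geodesic issues from $\cR$ and has cutting sequence $s_2 s_3 \cdots$. Iterating, the $f$-itinerary $(I_0, I_1, \ldots)$ of $\xi$ reproduces the cutting sequence, i.e.\ $s_{I_j} = s_{j+1}$, and the Markov property $f(I_j) \supset I_{j+1}$ is precisely the statement that $(I_j, I_{j+1}) \in E$. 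Truncating, $p = (I_0, \ldots, I_{n-1})$ is a directed path with $\cL(p) = w$, so that $\pi(w) = g$ and $|w| = n = |g|$, which establishes existence together with the length statement.

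The main obstacle is uniqueness: that no two distinct admissible words can have the same image $g$. Here the precise partition is essential — recall that $\cP$ includes the endpoints of geodesics meeting $\p\cR$ only in a vertex. These vertex-incident lines are exactly the loci along which the geodesic representative of a group element is ambiguous in the surface group, and cutting $\p\D$ along them renders the selection $I \mapsto s_I$, and hence every itinerary, deterministic. I would exploit that each admissible path $p = (I_0, \ldots, I_{n-1})$ produces, from $f(I_j) \supset I_{j+1}$, a nested chain of arcs $s_{I_0}\cdots s_{I_{j-1}} I_j \subset \cdots \subset I_0$, so that $w$ and the octagon $w\cR = g\cR$ are pinned down by the combinatorics of the tessellation together with the terminal cylinder $I_{n-1}$. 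Because distinct cylinders of the Markov partition are disjoint and the cutting sequence read off $g\cR$ is geodesic, two admissible words with $\pi(w) = \pi(w') = g$ must label the same nested chain, which forces $p = p'$ and hence $w = w'$. Combining this injectivity with the existence and length statements above yields the asserted bijection between $\Lambda - \{e\}$ and admissible words, with matching word length.
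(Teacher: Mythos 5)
You should first note that the paper does not prove this lemma at all: it is quoted verbatim from Series \cite[Theorem 5.10 and Corollary 5.11]{series} (see also \cite[Theorem 2.8]{birman-series}), where it is a substantive theorem whose proof takes real work. So your proposal is an attempt to reprove cited background, and it compresses exactly the hard parts of those papers into assertions. The most serious gap is the length claim. You argue that a geodesic segment cannot recross an edge of the tessellation, so the cutting sequence has no backtracking, ``hence $w$ is reduced in $\F$ and $n=|g|$.'' The second half is a non sequitur: $\Lambda$ is not free, and a word reduced in $\F$ can represent an element of much smaller word length in $\Lambda$. For instance $aba^{-1}b^{-1}cdc^{-1}$ is reduced of length $7$ in $\F$ but equals $d$ in $\Lambda$. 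Proving that cutting sequences (equivalently, admissible words of the Bowen--Series coding) realize the word metric is precisely the content of the Dehn's-algorithm analysis in \cite{birman-series}; it requires controlling how geodesics and words interact with the vertex cycles of the tessellation (eight octagons around each vertex), not just the no-backtracking observation.

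Two further steps are asserted rather than proved. First, you claim the $f$-itinerary of the forward endpoint $\xi$ reproduces the cutting sequence, i.e.\ $s_{I_j}=s_{j+1}$. But the paper itself notes that for each $I\in\cI$ there are \emph{one or two} admissible choices of $s_I$; where two choices exist, the itinerary follows the chosen one, which need not be the side the particular geodesic $\gamma$ crosses, and moreover $\xi$ depends on how you extend $\gamma$ past $gO$. So existence of an admissible path with $\cL(p)=w$ for your cutting-sequence word $w$ does not follow as stated. Second, the uniqueness argument says that two admissible words representing the same $g$ ``must label the same nested chain'' because cylinders of the Markov partition are disjoint; that is exactly what has to be shown, and disjointness of cylinders in $V^\N$ (or of the corresponding arcs in $\p\D$) does not by itself rule out two distinct admissible words in $\F$ having the same image in $\Lambda$ --- eliminating that possibility is the heart of Series' Theorem 5.10. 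In short, your geometric skeleton (cutting sequences, the Markov property, nested arcs) is the right picture, but each of the three conclusions --- existence, uniqueness, and the length formula --- rests on an unproved assertion, and the justification offered for the length formula is actually invalid.
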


\begin{thm}\label{thm:surface}
If $\Pi=(\Pi_{v,w})_{v,w\in V}$ is any stochastic matrix with $\Pi_{v,w}>0$ for all $(w,v)\in E$ then it is admissible of order 1. 
\end{thm}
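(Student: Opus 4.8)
The plan is to reduce the statement to three properties of the \emph{surface graph} $G=(V,E)$ itself and then verify each. Write $G_\Pi=(V,E_\Pi)$ for the graph associated to $\Pi$ as in the introduction, so that $(w,v)\in E_\Pi$ exactly when $\Pi_{v,w}>0$. The hypothesis ``$\Pi_{v,w}>0$ for all $(w,v)\in E$'' says precisely that $E\subseteq E_\Pi$. All three clauses in the definition of admissibility are monotone under enlarging the edge set: a good subgraph of $G$ is still a subgraph of $G_\Pi$; if $G$ is strongly connected then so is the supergraph $G_\Pi$ on the same vertex set; and since adding edges only creates more closed directed paths through a given vertex $v$, one has $\Gamma_v(G)\subseteq\Gamma_v(G_\Pi)$. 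Hence it suffices to show that $G$ is strongly connected, that $G$ contains a good subgraph of order $1$, and that $\Gamma_v(G)=\F$ for some $v$, where I decorate $\Gamma_v$ by the graph to indicate which closed paths are allowed.

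Strong connectivity of $G$ is a property of the Bowen--Series coding; I would either cite it from \cite{series,BufSer} or deduce it from the topological transitivity of the Markov map $f$ on $\partial\D$, together with the Markov property $f(I)\cap J\ne\emptyset\Rightarrow f(I)\supseteq J$, which guarantees $(I,J)\in E$ whenever some forward iterate of $I$ covers $J$. For the good subgraph of order $1$ I would argue as in the proof of Proposition~\ref{prop:sufficient}, but at the level of intervals rather than group elements: all of $a,b,c,d$ and their inverses occur as labels, and the geometric symmetry of the coding furnishes, for a chosen interval $p_1$, a partner interval $p_1^*$ with $\cL(p_1^*)=\cL(p_1)^{-1}$ (and likewise $q_1,q_1^*$). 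Because each image $f(I)$ is a union of many intervals, each vertex has many out-edges, so the required incidences $(u,p_1),(p_1,w),(u,q_1),(q_1,w),(p_1,q_1^*),(q_1^*,p_1),(q_1,p_1^*),(p_1^*,q_1)$ can be met; this is a finite verification against the explicit list of $48$ intervals in Figure~\ref{fig:surface}.

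The heart of the matter is $\Gamma_v=\F$. I would regard $\Gamma_v$ as the subgroup of $\F$ generated by the labels $\cL(c)$ of closed directed paths $c$ at $v$, and control it through Lemma~\ref{lem:series}: every geodesic word of $\Lambda$, in particular every power $s^n$ of a generator $s\in\{a,b,c,d\}$, is realized without cancellation by a directed path in $G$ whose vertices are all labelled $s$. For $n$ larger than the number of $s$-labelled intervals, the pigeonhole principle produces a repeated interval in this run, hence a closed directed path whose label is a nontrivial power of $s$; refining the choice to the interval containing the attracting fixed point of $s$ should even yield a self-loop labelled $s$. Transporting these loops to a single base vertex $v$ via strong connectivity, I would then check that the resulting cycle labels generate all of $\F$.

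The step I expect to be the main obstacle is precisely this last one. Cycle labels are in general \emph{not} single generators, and strong connectivity only lets one splice in a short loop at the cost of conjugating it by the uncontrolled label of a connecting path, so that a priori one obtains a subgroup generated by $a$ together with various conjugates of $b,c,d$ rather than by $a,b,c,d$ themselves. The genuine work is therefore to verify, from the explicit combinatorics of the octagon coding, that some well-chosen base vertex carries cycles whose labels Nielsen-generate the whole free group $\F$ --- a statement strictly stronger than the mere surjectivity onto $\Lambda$ supplied by Lemma~\ref{lem:series}. Everything else (the reduction via $E\subseteq E_\Pi$, strong connectivity, and the order-$1$ good subgraph) is monotonicity, a citation, or a finite incidence check.
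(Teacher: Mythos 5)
Your reduction to properties of the Bowen--Series graph $G$ itself (monotonicity of the three admissibility conditions under $E\subseteq E_\Pi$), your citation of strong connectivity from \cite{BufSer}, and your plan for the order-$1$ good subgraph all match the paper's argument in substance. But the proposal has a genuine gap, and you name it yourself: the step $\Gamma_v=\F$ is never actually carried out. Your pigeonhole argument on the paths realizing $s^n$ yields, after transporting to a common base vertex, only elements of the form $g_s\, s^{m_s}\, g_s^{-1}$ with uncontrolled conjugators $g_s$ (and exponents $m_s$ possibly greater than $1$), and such elements in general generate a proper subgroup of $\F$. Conceding that ``the genuine work is therefore to verify'' the generation statement is not the same as performing that verification; a proof whose central step is flagged as an unresolved obstacle is incomplete.

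The paper closes exactly this gap by a concrete computation with the eight \emph{special} intervals $I_s$, $s\in S$: it records the edge criterion that $(I_s,I_t)\in E$ if and only if $t\ne s^{-1}$ and $I_t$ is not adjacent to $I_{s^{-1}}$, from which one reads off a self-loop at $I_a$ and short cycles through $I_a$ visiting $I_c$, $I_d$, $I_b$. All of these cycles are based at the single vertex $v=I_a$, so no conjugation ambiguity ever arises, and their labels $a$, $ac$, $adc$, $adb$ visibly generate $\F_4$ (from $a$ and $ac$ one gets $c$; then $adc$ gives $ad$ and hence $d$; then $adb$ gives $b$). The same table of edges exhibits the good subgraph explicitly: $u=w=I_a$, $p=(I_a)$, $q=(I_c)$, $p^*=(I_{a^{-1}})$, $q^*=(I_{c^{-1}})$. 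So the missing ingredient in your proposal is precisely this finite, explicit incidence computation among the special intervals --- it simultaneously settles both the good-subgraph condition and $\Gamma_v=\F$ --- and without it (or some substitute) the theorem is not proved.
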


\begin{proof}
In \cite{BufSer}, it is shown that the adjacency matrix of $G$ is irreducible. Equivalently, $G$ is strongly connected. 

For $s\in S$, let $I_s \subset \cI=V$ be the unique interval contained in $\cL(s) \setminus \cup_{t\ne s} \cL(t)$. By direct inspection we see that for any $s,t \in S$, $(I_s,I_t) \in E$ if and only if: $t \ne s^{-1}$ and $I_t$ is not adjacent to $I_{s^{-1}}$. For example, there are directed edges from $I_a$ to $I_c,I_{c^{-1}},I_d$ and $I_{d^{-1}}$ but there are no directed edges from $I_a$ to $I_{a^{-1}}, I_b$ or $I_{b^{-1}}$. There is also a loop from $I_a$ to itself. So if $v=a$ then $\Gamma_v$ contains $\cL( I_a)=a$, $\cL(I_a,I_c)=ac$, $\cL(I_a,I_d,I_c)=adc$,  $\cL(I_a,I_d,I_b)=adb$. Since $a,ac,adc,adb$ generate $\F_4$, we have $\Gamma_v=\F_4$.

Let $u=w=I_a$, $p=(I_a), q=(I_c), p^*=(I_{a^{-1}})$, $q^*=(I_{c^{-1}})$. Then 
\begin{itemize}
\item $upw, uqw, pq^*p, qp^*q$ are directed paths in $G$;
\item $\cL(p^*) = \cL(p)^{-1}$, $\cL(q^*) = \cL(q)^{-1}$.
\end{itemize}
So $G$ contains a good subgraph of order 1. 
\end{proof}

Corollary \ref{cor:surface} follows immediately from Lemma \ref{lem:series} and Theorems \ref{thm:surface} and \ref{thm:main}.

\subsection{Outline of the argument}

We consider the {\em synchronous tail equivalence relation} $\cR_{sync}$ on $V^\N$ given by
$$\cR_{sync}=\{ (s,t) \in V^\N\times V^\N:~ \exists N ~(s_i=t_i ~\forall i\ge N) \}.$$
For a natural number $k>0$ we also consider the {\em $k$-step asynchronous tail equivalence relation} on $V^\N$ given by
$$\cR_k = \{ (s,t) \in V^\N \times V^\N:~ \exists p \in\Z, N \in \N~ (s_{pk+i} = t_{i} ~\forall i\ge N)\}.$$
Let $\sigma:V^\N \to V^\N$ denote the shift map $\sigma(s)_i = s_{i+1}.$ Observe that $\cR_k$ is generated by $\cR_{sync}$ and the orbit-equivalence relation of $\sigma^k$. So we have the following natural inclusions:
$$\cR_{sync} \subset \cR_k \subset \cR_1.$$
More generally, $\cR_k \subset \cR_d$ if $d \mid k$.  We also have a cocycle $\alpha:\cR_1 \to \FF$ defined by
$$\alpha(s,t) = \cL(s_1)\cdots \cL(s_{N+p}) \cdot (\cL(t_1)\cdots \cL(t_N))^{-1}$$
where $N,p$ are such that $s_{p+i} = t_{i} ~\forall i\ge N$. 

Given a measure-preserving action $\FF \cc (X,\mu)$ on a probability space and a subequivalence relation $\cS$ of $\cR_1$, we let $\cS^X$ denote the skew-product equivalence relation on $V^\N \times X$:
$$\cS^X = \Big\{ \big((s,x), (t,y)\big):~ s\cS t, \alpha(t,s)x = y\Big\}.$$

%We say that $\alpha$ is {\em weakly mixing} with respect to $\cS$ if: ergodicity of $\FF \cc (X,\mu)$ implies ergodicity of $\cS^X$ (which means that if $A \subset V^\N \times X$ is measurable and is a union of $\cS^X$ equivalence classes then $\nu \times \mu(A) \in \{0,1\}$). Unfortunately, this condition is a bit too strong for us (if $\alpha$ is weakly mixing with respect to $\cR_{sync}$ then it follows from standard martingale arguments that $S_n f$ already converges pointwise a.e. $\E[f|\FF]$). So we will need a slight weakening of this condition.

%(-- explain how weak mixing of $\cR_{sync}$ implies main results --)

%The weak mixing condition can be reformulated in terms of $\sigma$-algebras as follows. 

Given a subequivalence relation $\cS \subset \cR_1$, let $\cF^X_\cS$ denote the sigma-algebra of measurable subsets of $V^\N \times X$ that are unions of $\cS^X$-equivalence classes. In other words, $\cF^X_\cS$ is the $\cS^X$-invariant sigma-algebra. 

%Then weak mixing of $\alpha$ wrt to $\cS$ is equivalent to triviality of $\cF^X_\cS$. 

For convenience, we will let $\cF_{sync}^X, \cF_k^X$ denote the $\cR_{sync}^X$ and $\cR_k^X$-invariant sigma-algebras respectively. The main technical step in the proof of Theorem \ref{thm:main} is:

\begin{thm}\label{thm:erg}
If the directed graph $G$ contains a good subgraph (as in Definition \ref{defn:good-graph}) then $\cF_{2k}^X = \cF_{sync}^X$ (up to sets of measure zero).
\end{thm}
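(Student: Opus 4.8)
The plan is to prove the equality of sigma-algebras $\cF^X_{2k} = \cF^X_{sync}$ by showing that the extra identifications present in $\cR_{2k}^X$ beyond $\cR_{sync}^X$ do not enlarge the invariant sigma-algebra. Since $\cR_{sync} \subset \cR_{2k}$ always holds, the inclusion $\cF^X_{2k} \subset \cF^X_{sync}$ is automatic: any set that is a union of $\cR_{2k}^X$-classes is \emph{a fortiori} a union of the smaller $\cR_{sync}^X$-classes. So the real content is the reverse inclusion: every $\cR_{sync}^X$-invariant function is automatically $\cR_{2k}^X$-invariant (mod null sets). Because $\cR_{2k}$ is generated by $\cR_{sync}$ together with the orbit relation of $\sigma^{2k}$, it suffices to show that a bounded $\cR_{sync}^X$-invariant function $\Phi$ on $V^\N\times X$ is invariant under the skew-product transformation induced by $\sigma^{2k}$, i.e. under the map sending $(s,x)$ to $(\sigma^{2k}s,\, \alpha(\sigma^{2k}s,s)^{-1}x)$ or its inverse.

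The key idea is to exploit the good subgraph to build an explicit chain of $\cR_{sync}^X$-moves that realizes a single $\sigma^{2k}$-step. First I would fix the data of the good subgraph: vertices $u,w$ and paths $p,q,p^*,q^*$ of length $k$ with $\cL(p^*)=\cL(p)^{-1}$, $\cL(q^*)=\cL(q)^{-1}$, and with $upw,uqw,pq^*p,qp^*q$ all directed paths. The crucial algebraic consequence is that the concatenations $pq^*$ and $qp^*$ are loops in the cocycle sense: traversing $pq^*$ contributes $\cL(p)\cL(q^*) = \cL(p)\cL(q)^{-1}$ to the $\FF$-cocycle, and these length-$2k$ blocks can be inserted and deleted while staying inside $\cR_{sync}$-classes after a suitable synchronous tail comparison. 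Concretely, given a generic point $(s,x)$, I would surgically modify a finite prefix of $s$ so as to route it through the good subgraph, producing a sequence of sequences that are pairwise $\cR_{sync}$-equivalent (they agree from some point on) but whose prefixes differ by exactly one $2k$-block; the cocycle $\alpha$ bookkeeping ensures the $X$-coordinate transforms correctly so that $\Phi$, being $\cR_{sync}^X$-invariant, takes the same value throughout. Assembling these moves shows $\Phi(\sigma^{2k}s, \alpha\text{-twist}) = \Phi(s,x)$ for a.e. point.

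The main obstacle is the \emph{measurability and almost-everywhere} aspect: the surgery just described works pointwise only for sequences $s$ whose tail actually visits the relevant vertices of the good subgraph, and one must guarantee that a.e. point can be so modified. Here strong connectivity of $G$ (part of admissibility, but note the theorem as stated only assumes the good subgraph — so I would lean on the stationary measure having full support, $\nu(v)>0$) is what ensures the Markov measure gives positive weight to the cylinder sets needed to perform the insertion, so that the set of points where the argument applies has full measure. The technically delicate point is that the skew-product relation $\cR_{2k}^X$ identifies pairs $(s,x),(t,y)$ with $y=\alpha(t,s)x$, so the modifications in the $V^\N$-coordinate are forced to be accompanied by the correct $\FF$-action on $X$; one must verify that the net cocycle accumulated along the chain of elementary $\cR_{sync}^X$-moves equals precisely the cocycle $\alpha$ attached to the single $\sigma^{2k}$-step being realized. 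This is a finite but careful computation using $\cL(p^*)=\cL(p)^{-1}$ and $\cL(q^*)=\cL(q)^{-1}$, which cause the spurious group elements introduced by the detour through $p,q,p^*,q^*$ to cancel in $\FF$. I expect that encoding this cancellation cleanly — rather than the measure-theoretic reduction — will be where the bulk of the work lies, and it is precisely the step where the reversibility assumption of earlier work \cite{Buf02} is replaced by the weaker good-subgraph hypothesis.
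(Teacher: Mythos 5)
Your reduction is sound: the trivial inclusion is indeed $\cF^X_{2k}\subset\cF^X_{sync}$, the content is the reverse inclusion, and since $\cR_{2k}$ is generated by $\cR_{sync}$ and $\sigma^{2k}$ it suffices to show every $\cR_{sync}^X$-invariant $f$ satisfies $f\circ\sigma_X^{2k}=f$. But the engine you propose for this step cannot work. You claim that a single $\sigma^{2k}$-step can be realized by ``an explicit chain of $\cR_{sync}^X$-moves,'' yielding the \emph{pointwise} identity $f(\sigma_X^{2k}(s,x))=f(s,x)$ for a.e.\ $(s,x)$. An equivalence relation is transitive, so any finite chain of $\cR_{sync}^X$-moves starting at $(s,x)$ ends at a point in the same $\cR_{sync}^X$-class; but $\sigma_X^{2k}(s,x)$ lies in a \emph{different} $\cR_{sync}^X$-class for a.e.\ point, because $(s,\sigma^{2k}s)\in\cR_{sync}$ forces $s_i=s_{i+2k}$ for all large $i$, i.e.\ $s$ eventually $2k$-periodic --- a countable, and in every nondegenerate case $\tilde{\nu}$-null, set of sequences. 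Equivalently: deleting a $2k$-block from a prefix shifts the phase of the tail and is precisely an $\cR_{2k}$-move that is \emph{not} an $\cR_{sync}$-move, so your chain can never terminate at $\sigma_X^{2k}(s,x)$, nor at anything synchronously equivalent to it. Were your pointwise claim true, the two equivalence relations themselves would coincide a.e., which is strictly stronger than the theorem (only the invariant sigma-algebras coincide) and is false. Contrary to your closing assessment, the cocycle cancellation is the easy, exact part of the argument; what is missing is the entire limiting mechanism, and the ``measure-theoretic reduction'' you set aside is an essential ingredient of it.

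What is needed instead is an approximation scheme in which the discrepancy between a synchronous move and a genuine shift is pushed off to infinity and killed in an $L^1$ limit. The paper's construction: let $\tau_n(s)$ be the $n$-th occurrence of $upw$ or $uqw$ in $s$ (finite a.e.); let $\omega_n$ swap $p\leftrightarrow q$ at that occurrence (finitely many coordinates change, so this is synchronous); and let $\psi_n$ simultaneously delete the first $2k$ symbols and insert the $2k$ symbols $qp^*$ (resp.\ $pq^*$) in front of the swapped block --- net length change zero, hence synchronous, and admissibility of the image uses exactly that $qp^*q$ and $pq^*p$ are directed paths. Then $\psi_n\omega_n(s)$ and $\sigma^{2k}\omega_n(s)$ are \emph{never equal} (their tails are offset by $2k$) but agree on a prefix of length at least $\tau_n(s)-k\ge n-k$, while $\cL(p^*)=\cL(p)^{-1}$, $\cL(q^*)=\cL(q)^{-1}$ give the exact cocycle identity $\alpha(\psi_n\omega_n s,\omega_n s)=\alpha(\sigma^{2k}\omega_n s,s)$. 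Lifting to $V^\N\times X$, the points $\Psi_n\Omega_n(s,x)$ and $\sigma_X^{2k}\Phi_n(s,x)$ therefore have identical $X$-coordinates and $V^\N$-coordinates within $1/(n-k)$, so $\norm{f\circ\Psi_n\Omega_n-f\circ\sigma_X^{2k}\Phi_n}_1\to 0$ for continuous $f$; passing to general $f\in L^1$ requires the composition operators to be uniformly bounded on $L^1$, i.e.\ the two-sided bounds on the Radon--Nikodym derivatives of $(\omega_n^{-1})_*\tilde{\nu}$ and $((\psi_n\omega_n)^{-1})_*\tilde{\nu}$. Finally, $\cR_{sync}^X$-invariance of $f$ gives $f\circ\Psi_n\Omega_n=f\circ\Omega_n$ and $f\circ\sigma_X^{2k}\circ\Phi_n=f\circ\sigma_X^{2k}$, and letting $n\to\infty$ (with $\norm{f-f\circ\Omega_n}_1\to 0$) yields $f=f\circ\sigma_X^{2k}$ in $L^1$. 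Your instinct about where the good-subgraph hypotheses enter is correct, but without this asymptotic argument the proof does not close.
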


We prove this in the next section and in \S \ref{sec:main} use it to prove Theorem \ref{thm:main}.

\subsection{Acknowledgements.}
The authors are deeply grateful to Vadim Kaimanovich for useful discussions.
Lewis Bowen is supported in part by NSF grant DMS-0968762, NSF CAREER
    Award DMS-0954606 and BSF grant 2008274.
Alexander Bufetov's research is carried out  thanks to the support of the A*MIDEX project (no. ANR-11-IDEX-0001-02) funded by the  programme ``Investissements d'Avenir '' of the Government of the French Republic, managed by the French National Research Agency (ANR). Bufetov is also supported in part by  the Grant MD-2859.2014.1 of the President of the Russian Federation,
by the Programme ``Dynamical systems and mathematical control theory''
of the Presidium of the Russian Academy of Sciences, by the ANR under the project ``VALET'' of the Programme JCJC SIMI 1,
and by the
RFBR grants 12-01-31284, 12-01-33020, 13-01-12449.

\section{Proof of Theorem \ref{thm:erg}}
Let $u,w\in V$ and $p,q, p^*,q^*$ be directed paths in $G$ satisfying the requirements of Definition \ref{defn:good-graph}. We need more notation:
\begin{notation}
If $s \in V^\N$ and $n<m$ are natural numbers then we let $s_{[n,m]} = (s_n,s_{n+1},\ldots, s_m) \in V^{m-n+1}$. We also write $s_{[n,\infty)} = (s_n,s_{n+1},\ldots) \in V^\N$. 
\end{notation}
Let us define
\begin{itemize}
\item $\tau_n:V^\N \to \N$ so that $\tau_n(s)$ is the $n$-th time of occurrence of either $upq$ or $uqw$. In other words, $\tau_n(s)$ is the smallest natural number so that there exist $i_1<i_2<\ldots< i_n$ with $i_n=\tau_n(s)$ so that for each $j$
$$s_{[i_j,i_{j}+k+1]} \in \{upw, uqw\}.$$

%\begin{itemize}
%\item $s(i_j) = u$, $s(i_j+k+1)=w$
%\item either ($s(i_j+m) = p_m$ for every $1\le m \le k$) or ($s(i_j+m) = q_m$  for every $1\le m \le k$).
%\end{itemize}

\item $\omega_n:V^\N \to V^\N$ by 

\begin{displaymath}
\omega_n(s)=\left\{ \begin{array}{cc}
s_{[1,\tau_n(s)]}qs_{[\tau_n(s)+k+1,\infty)} & \textrm{ if } s_{[\tau_n(s), \tau_n(s)+k+1]} = upw  \\
s_{[1,\tau_n(s)]}ps_{[\tau_n(s)+k+1,\infty)} & \textrm{ if } s_{[\tau_n(s), \tau_n(s)+k+1]} = uqw  
\end{array}
\right.
\end{displaymath}

\item Note that $\omega_n$ is invertible. So we can define $\psi_n: V^\N \to V^\N$ by
\begin{displaymath}
(\psi_n\omega_n(s))=\left\{ \begin{array}{cc}
\omega_n(s)_{[2k+1,\tau_n(s)+k]}p^*\omega_n(s)_{[\tau_n(s)+1,\infty)} & \textrm{ if }s_{[\tau_n(s), \tau_n(s)+k+1]} = upw \\
\omega_n(s)_{[2k+1,\tau_n(s)+k]}q^*\omega_n(s)_{[\tau_n(s)+1,\infty)} & \textrm{ if }s_{[\tau_n(s), \tau_n(s)+k+1]} = uqw \\
\end{array}
\right. 
\end{displaymath}
\begin{displaymath}
\quad ~~=\left\{ \begin{array}{cc}
s_{[2k+1,\tau_n(s)]}qp^*qs_{[\tau_n(s)+k+1,\infty)} & \textrm{ if }s_{[\tau_n(s), \tau_n(s)+k+1]} = upw \\
s_{[2k+1,\tau_n(s)]}pq^*ps_{[\tau_n(s)+k+1,\infty)} & \textrm{ if }s_{[\tau_n(s), \tau_n(s)+k+1]} = uqw \\
\end{array}
\right.
\end{displaymath}

\item Recall that $\nu$ is the $\Pi$-stationary measure on $V$. Let $\tilde{\nu}$ be the associated measure on $V^\N$. To be precise, for any $t_1,\ldots, t_n \in V$,
$$\tilde{\nu}( \{s\in V^\N:~ s_i=t_i~\forall 1\le i \le n\}) = \nu(t_n)\Pi_t = \nu(t_n) \Pi_{t_n,t_{n-1}}\cdots \Pi_{t_2,t_1}.$$ 

\item $C>0$ be a constant so that almost everywhere holds
$$C^{-1} \le \frac{ d (\omega_n^{-1})_*\tnu}{d\tnu}(s) \le C, \quad C^{-1} \le \frac{ d ((\psi_n\omega_n)^{-1})_*\tnu}{d\tnu}(s) \le C$$
The existence of such a constant follows from the finiteness of $V$ (so that there is a uniform bound on the ratio of any two nonzero entries of $\Pi$) and explicit computation using the formulae above.
  %Such a number exists because $\tnu$ is the Markov measure. Here we are that $\omega_n$ and $\psi_n\omega_n$ do not change the synchronous tail relation
 
% using Definition \ref{defn:good-graph} to infer that 
 %Here we are using Lemma \ref{Olga} (it is important that $upw, uqw, pq^{-1}p, qp^{-1}q$ are all directed paths in $G$). 

\end{itemize}

%\begin{remark}
%The only place in the proof of Theorem \ref{thm:main} where we use finiteness of $V$ is in proving that the constant $C>0$ defined above exists. %It seems that one could weaken this requirement if necessary.
%\end{remark}

Recall that $\sigma:V^\N \to V^\N$ is defined by $\sigma(s)_i = s_{i+1}$. Let $d_{V^\N}$ denote the distance function on $V^\N$ defined by $d_{V^\N}\big( (s_1,s_2, \ldots), (t_1, t_2, \ldots) \big) = \frac{1}{n}$ where $n$ is the largest  natural number such that $s_i=t_i$ for all $i < n$.
%{\bf SHOULD THE NOTATION BE $\psi_n$ and $\omega_n$ ??? Also, $\psi$ is the probability density on the boundary}
\begin{prop}\label{prop:ve}
For every $n>2k+1$,
%There exists a constant $C>0$ such that for any integer $n>5$, there exist measurable maps $\psi_n,\omega_n:\cA^\N \to \cA^\N$ such that
\begin{enumerate}
\item $\forall s \in V^\N$, $d_{V^\N}\big(\psi_n  \omega_n (s), \sigma^{2k} \omega_n(s)\big)\le \frac{1}{\tau_n(s)-k}$;
\item $\forall s \in V^\N$,  $d_{V^\N}(s,\omega_n s )\le \frac{1}{\tau_n(s)}$;
%\item $\phi \in \Inn(V^\N \times X)$;
\item the graphs of $\omega_n$ and $\psi_n$ are contained in $\cR_{sync}
$;
\item $\forall s \in \cA^\N$, $\alpha(\psi_n\omega_n s, \omega_n s) = \alpha( \sigma^{2k}\omega_n s, s)$. 

%$\exists g\in \FF$ such that $\psi_n \omega_n(s) = g \omega_n(s)$ and $\sigma^2\omega_n(s) = g s$.
\item $\forall f \in L^1(\cA^\N)$, $\norm{f\circ \omega_n}_1 \le C \norm{f}_1$ and $\norm{f \circ \psi_n}_1 \le C^2 \norm{f}_1$.  

%$$\max\left( \|f \circ \omega \|_1,  \|f \circ \psi \|_1\right) \le (2r-1)^2 \|f\|_1$$
%where $r$ is the rank of the free group $\FF$.
\end{enumerate}
\end{prop}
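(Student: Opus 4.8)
The plan is to fix a sequence $s\in V^\N$ and an integer $n>2k+1$, set $T:=\tau_n(s)$, and carry out the whole argument in the case $s_{[T,T+k+1]}=upw$; the case $uqw$ is identical after interchanging $p\leftrightarrow q$ and $p^*\leftrightarrow q^*$. Since the $n$ occurrences sit at $i_1<\dots<i_n=T$ we have $T\ge n>2k+1$, so every index window below is nondegenerate. The starting point is to record the four relevant sequences as concatenations sharing a common prefix and a common tail:
$$s=s_{[1,T]}\,p\,s_{[T+k+1,\infty)},\qquad \omega_n s=s_{[1,T]}\,q\,s_{[T+k+1,\infty)},$$
$$\sigma^{2k}\omega_n s=s_{[2k+1,T]}\,q\,s_{[T+k+1,\infty)},\qquad \psi_n\omega_n s=s_{[2k+1,T]}\,q\,p^*\,q\,s_{[T+k+1,\infty)},$$
the first obtained from $s_{[T,T+k+1]}=upw$, the second from the definition of $\omega_n$, and the last two from the second display defining $\psi_n\omega_n$. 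Everything then reduces to comparing prefixes, tails, and label products of these four words.

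Parts (1)--(3) follow by inspection. For (2), $s$ and $\omega_n s$ share the prefix $s_{[1,T]}$ and differ only inside the length-$k$ block at positions $T+1,\dots,T+k$, so they agree through position $T$ and $d_{V^\N}(s,\omega_n s)\le 1/\tau_n(s)$. For (1), both $\psi_n\omega_n s$ and $\sigma^{2k}\omega_n s$ begin with $s_{[2k+1,T]}\,q$, a word of length $T-k$, after which the first continues with $p^*$ and the second with the common tail; hence they agree through position $\tau_n(s)-k$ and $d_{V^\N}\le 1/(\tau_n(s)-k)$. For (3) I note that $s$ and $\omega_n s$ share the entire tail $s_{[T+k+1,\infty)}$, while a length count shows the prefixes of $\omega_n s$ and of $\psi_n\omega_n s$ both have length $T+k$, so in both sequences the tail $s_{[T+k+1,\infty)}$ begins at the same position $T+k+1$; thus $(s,\omega_n s)$ and $(\omega_n s,\psi_n\omega_n s)$ are synchronously tail-equivalent, i.e. the graphs of $\omega_n$ and $\psi_n$ lie in $\cR_{sync}$.

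The crux is (4), where the defining labels of a good subgraph enter. Writing $\beta_m(r):=\cL(r_1)\cdots\cL(r_m)$, the cocycle reads $\alpha(r,r')=\beta_{M+p}(r)\,\beta_M(r')^{-1}$ for any index $M$ beyond the point of agreement, with $p$ the relative shift. For the left-hand side $(\psi_n\omega_n s,\omega_n s)$ is synchronous ($p=0$) by (3); choosing $M$ large, the common tail contributes a factor $\cL(s_{[T+k+1,M]})$ which cancels between $\beta_M(\psi_n\omega_n s)$ and $\beta_M(\omega_n s)$, and using $\cL(q)\cL(q)^{-1}=e$ and then $\cL(p^*)=\cL(p)^{-1}$ the result collapses to $\cL(s_{[2k+1,T]})\,\cL(q)\,\cL(p)^{-1}\,\cL(s_{[1,T]})^{-1}$. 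For the right-hand side $(\sigma^{2k}\omega_n s,s)$ has relative shift $-2k$, so with the same $M$ the first product runs to $M-2k$; again the common tail cancels and the surviving terms reduce to exactly the same element $\cL(s_{[2k+1,T]})\,\cL(q)\,\cL(p)^{-1}\,\cL(s_{[1,T]})^{-1}$. I expect this to be the main obstacle: no single manipulation is deep, but one must track the shift index in the definition of $\alpha$ with care and verify the precise cancellation of the tail factor, and it is exactly here that $\cL(p^*)=\cL(p)^{-1}$ (resp. $\cL(q^*)=\cL(q)^{-1}$) is indispensable.

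Finally (5) is a change-of-variables estimate built from the two given Radon--Nikodym bounds. For a bijection $\phi$ one has $\norm{f\circ\phi}_1=\int|f|\,d(\phi_*\tnu)$, and a short computation gives $\frac{d(\phi_*\tnu)}{d\tnu}=\Big(\frac{d((\phi^{-1})_*\tnu)}{d\tnu}\Big)^{-1}\circ\phi^{-1}$; since the given derivative lies in $[C^{-1},C]$, so does its reciprocal, whence $\norm{f\circ\omega_n}_1\le C\norm{f}_1$ and likewise $\norm{f\circ(\psi_n\omega_n)}_1\le C\norm{f}_1$. For $\psi_n$ itself I would write $f\circ\psi_n=(f\circ\psi_n\omega_n)\circ\omega_n^{-1}$ and bound $\norm{g\circ\omega_n^{-1}}_1=\int|g|\,d\big((\omega_n^{-1})_*\tnu\big)\le C\norm{g}_1$ directly from the first bound (no reciprocal needed in this direction); applying this with $g=f\circ\psi_n\omega_n$ and combining with the previous estimate yields $\norm{f\circ\psi_n}_1\le C^2\norm{f}_1$.
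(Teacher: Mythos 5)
Your proof is correct and follows essentially the same route as the paper: write the four sequences as explicit concatenations sharing a prefix and tail, verify the metric and synchronicity claims by inspection, compute both cocycle values directly (with $\cL(p^*)=\cL(p)^{-1}$ entering exactly where you say), and prove item 5 by change of variables. The only cosmetic difference is in item 5, where the paper invokes the involution property $\omega_n=\omega_n^{-1}$ while you instead use the Radon--Nikodym identity for a general bijection together with the given bound on $d((\omega_n^{-1})_*\tnu)/d\tnu$; both are valid.
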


\begin{proof}
Items 1 and 2 are obvious. It is clear that the graph of $\omega_n$ is contained in $\cR_{sync}$. This implies the graph of $\psi_n\omega_n$ is contained in $\cR_{sync}$ and therefore, since $\omega_n$ is invertible, the graph of $\psi_n$ is contained in $\cR_{sync}$. 

For simplicity's sake, we will drop the subscripts $n$ in the following computations. So $\psi=\psi_n,\omega=\omega_n,\tau=\tau_n$.

Suppose that $s \in V^\N$ satisfies $s_{[\tau(s),\tau(s)+k+1]} = upw$.  Let $N=\tau(s)$. Because $(\psi\omega(s))_i = \omega(s)_i$ for all $i>N$ the definition of $\alpha$ implies 
\begin{eqnarray*}
\alpha(\psi\omega s, \omega s)
&=& \cL(\psi\omega(s)_1) \cdots \cL(\psi\omega(s)_{N})\Big( \cL(\omega(s)_1) \cdots \cL(\omega(s)_N)\Big)^{-1}\\
%&=&\omega(s)_{1+2k} \cdots \omega(s)_{\tau(s)+k}p^{-1}_k\cdots p^{-1}_1\Big(\omega(s)_1 \cdots \omega(s)_N\Big)^{-1}\\
&=& \cL(s_{1+2k})\cdots \cL(s_{N}) \cL(q_1)\cdots \cL(q_k) \cL(p_k)^{-1}\cdots \cL(p_1)^{-1}\Big( \cL(s_1) \cdots \cL(s_N) \Big)^{-1}\\
\end{eqnarray*}
Because $(\sigma^{2k} \omega s)_{i-2k} = (\omega s)_i = s_i$ for all $i > N+k$ the definition of $\alpha$ implies 
\begin{eqnarray*}
\alpha( \sigma^{2k}\omega s, s)&=& \cL(\sigma^{2k}\omega(s)_1) \cdots \cL(\sigma^{2k}\omega(s)_{N-k})\Big( \cL(s_1)\cdots \cL(s_{N+k}) \Big)^{-1} \\
&=& \cL((\omega s)_{1+2k}) \cdots \cL((\omega s)_{N+k})\Big(\cL(s_1)\cdots \cL(s_{N+k})\Big)^{-1} \\
&=& \cL(s_{1+2k}) \cdots \cL(s_N) \cL(q_1)\cdots \cL(q_k)  \Big( \cL(s_1)\cdots \cL(s_N) \cL(p_1)\cdots \cL(p_k)  \Big)^{-1} \\
&=& \alpha(\psi\omega s, \omega s).
\end{eqnarray*}
The case when $s_{[\tau(s),\tau(s)+k+1]} = uqw$ is similar. This proves item 4.

It follows from the choice of $C>0$ (made right before this proposition) that for every $f\in L^1(V^\N)$,
 $$\norm{f\circ \omega}_1 \le C \norm{f}_1, \quad \| f\circ \psi \omega \|_1 \le C \|f\|_1.$$
 Since $\omega$ is invertible, this implies 
$$\| f \circ \psi\|_1 = \| f\circ \psi \omega \circ \omega^{-1} \|_1 \le C \| f\circ \psi \omega  \|_1 \le C^2 \|f\|_1.$$
Here we used that $\omega=\omega^{-1}$. This establishes the last claim.
\end{proof}

\begin{defn}\label{defn:sigma}
Define $\sigma_X: V^\N \times X \to V^\N \times X$ by $\sigma_X(s,x) =( \sigma s, \alpha(\sigma s,s)x)$. Note $\alpha(\sigma s,s) = s_1^{-1}.$
So we can also write $\sigma_X(s,x) = (\sigma s,  s_1^{-1} x).$
\end{defn}

\begin{lem}
There exist measurable maps $\Phi_n,\Psi_n,\Omega_n:V^\N \times X \to V^\N \times X$ (for $n > 2k+1$) such that
\begin{enumerate}
\item for all $f\in L^1(V^\N \times X)$, $\lim_{n\to\infty} \|f\circ \Psi_n \circ \Omega_n- f\circ \sigma_X^{2k} \circ \Phi_n\|_1 =0$;
\item for all $f\in L^1(V^\N \times X)$, $\lim_{n\to\infty} \|f\circ \Omega_n-f\|_1=0$;
\item the graphs of $\Phi$ and $\Psi$ are contained in $\cR_{sync}
^X$.
\end{enumerate}
\end{lem}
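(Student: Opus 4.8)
The plan is to lift the three base maps of Proposition \ref{prop:ve} to the skew product $V^\N\times X$, choosing the lifts so that the cocycle identity of Proposition \ref{prop:ve}(4) forces the $X$-coordinates to match exactly. Concretely I would set
$$\Omega_n(s,x) = (\omega_n s,\, x),$$
$$\Psi_n(s,x) = (\psi_n s,\, \alpha(\psi_n s, s)\,x),\qquad \Phi_n(s,x) = (\omega_n s,\, \alpha(\omega_n s, s)\,x),$$
all $L^1$-norms being taken with respect to $\tilde\nu\times\mu$. The whole construction hinges on $\Omega_n$ being the \emph{untwisted} lift (it leaves $x$ fixed), while $\Phi_n$ and $\Psi_n$ carry the cocycle twist $\alpha(\cdot,s)$ prescribed by the definition of $\cS^X$.

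Item (3) is then immediate: by Proposition \ref{prop:ve}(3) the graphs of $\omega_n$ and $\psi_n$ lie in $\cR_{sync}$, and since $\Phi_n,\Psi_n$ apply exactly the twist dictated by the definition of $\cR_{sync}^X$, their graphs lie in $\cR_{sync}^X$. (Note that $\Omega_n$ is \emph{not} claimed to have graph in $\cR_{sync}^X$, and indeed does not.) For item (2) I would use that $\Omega_n$ acts only on the base coordinate: by Proposition \ref{prop:ve}(2), $\omega_n s$ agrees with $s$ on the first $\tau_n(s)-1$ symbols, and since $\tau_n(s)\ge n$ this gives $f\circ\Omega_n=f$ for every cylinder function $f$ once $n$ exceeds the base-depth of $f$. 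I would then pass to general $f\in L^1$ by density of cylinder functions and the uniform bound $\|f\circ\Omega_n\|_1\le C\|f\|_1$, which follows from Proposition \ref{prop:ve}(5) applied on the base (the $X$-coordinate being untouched).

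The heart of the matter is item (1). Using $\sigma_X^{2k}(t,y)=(\sigma^{2k}t,\alpha(\sigma^{2k}t,t)y)$ together with the cocycle identity, I would compute
$$\Psi_n\circ\Omega_n(s,x) = \big(\psi_n\omega_n s,\ \alpha(\psi_n\omega_n s,\omega_n s)\,x\big),\qquad \sigma_X^{2k}\circ\Phi_n(s,x) = \big(\sigma^{2k}\omega_n s,\ \alpha(\sigma^{2k}\omega_n s, s)\,x\big).$$
Proposition \ref{prop:ve}(4) says precisely that the two group elements coincide, $\alpha(\psi_n\omega_n s,\omega_n s)=\alpha(\sigma^{2k}\omega_n s, s)$, so these two maps have \emph{identical} $X$-coordinates; their base coordinates differ by at most $d_{V^\N}(\psi_n\omega_n s,\sigma^{2k}\omega_n s)\le(\tau_n(s)-k)^{-1}$ by Proposition \ref{prop:ve}(1). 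Since $\tau_n(s)\ge n$, for any cylinder function $f$ of base-depth $m$ one gets $f\circ\Psi_n\circ\Omega_n=f\circ\sigma_X^{2k}\circ\Phi_n$ identically once $n\ge m+k+1$. Density of cylinder functions, together with the uniform bounds $\|f\circ\Psi_n\circ\Omega_n\|_1\le C^3\|f\|_1$ and $\|f\circ\sigma_X^{2k}\circ\Phi_n\|_1\le C\|f\|_1$ (from Proposition \ref{prop:ve}(5) and the fact that $\sigma_X^{2k}$ preserves $\tilde\nu\times\mu$, $\tilde\nu$ being shift-invariant by row-stochasticity of $\Pi$), then upgrades this to the stated $L^1$-limit.

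The main obstacle is forcing the $X$-coordinates to agree, and this is exactly why $\Omega_n$ must be taken untwisted: if $\omega_n$ were lifted with its natural cocycle twist in \emph{both} $\Omega_n$ and $\Phi_n$, the two $X$-coordinates of item (1) would differ by the nontrivial factor $\alpha(\omega_n s, s)$, a conjugate of $\cL(q)\cL(p)^{-1}$, which does not decay. The only genuinely non-formal input is that $\tau_n(s)<\infty$ for almost every $s$ (so that the maps are defined a.e.); I would justify this by noting that the block $upw$ or $uqw$ occurs with positive $\tilde\nu$-probability and hence, by recurrence of the stationary chain, infinitely often almost everywhere, which together with the trivial bound $\tau_n(s)\ge n$ supplies all the convergences above.
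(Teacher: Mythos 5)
Your proposal is correct and follows essentially the same route as the paper's own proof: the identical three lifts (the untwisted $\Omega_n(s,x)=(\omega_n s,x)$ and the twisted $\Phi_n,\Psi_n$), with Proposition \ref{prop:ve}(4) used in exactly the same way to make the $X$-coordinates in item (1) coincide, and Proposition \ref{prop:ve}(5) supplying the uniform $L^1$ bounds needed to pass from a dense class to all of $L^1$. The only cosmetic difference is your dense class: you use cylinder functions (obtaining exact equality of $f\circ\Psi_n\circ\Omega_n$ and $f\circ\sigma_X^{2k}\circ\Phi_n$ for large $n$), whereas the paper metrizes $V^\N\times X$ and applies bounded convergence to continuous functions; both arguments are equally valid.
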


\begin{proof}
For $n>2k+1$ an integer, let $\psi_n$ and $\omega_n$ be as in Proposition \ref{prop:ve}. Define
\begin{eqnarray*}
\Omega_n(s, x) &:=& (\omega_n  s, x) \\
\Phi_n(s, x)        &:=& (\omega_n  s, \alpha(\omega_n  s, s) x)\\
\Psi_n(s, x)        &:=& (\psi_n s, \alpha( \psi_n  s, s) x).
\end{eqnarray*}

Since the graphs of $\psi_n$ and $\omega_n$ are contained in $\cR_{sync}
$, the graphs of $\Phi_n$ and $\Psi_n$ are contained in $\cR_{sync}^X$. Let $d_X$ be a metric on $X$ that induces its Borel structure and makes $X$ into a compact space. For $(s, x),(s', x') \in V^\N \times X$, define $d_*((s, x),(s', x')) = d_X(x,x')  + d_{V^\N} ( s, s')$. By the previous proposition, $d_*(\Omega_n(s, x), (s, x) ) = d_{V^\N}(\omega_n(s), s) \le 1/\tau_n(s) \le 1/n$. Also by the previous proposition: 
\begin{eqnarray*}
\Psi_n\Omega_n(s, x) &=& (\psi_n\omega_n s, \alpha( \psi_n\omega_n  s, \omega_n  s) x) \\
\sigma_X^{2k}\Phi_n(s, x) &=& \sigma_X^{2k}( \omega_n s, \alpha(\omega_n  s,  s) x) = (\sigma^{2k}\omega_n s, \alpha(\sigma^{2k}\omega_n s,\omega_n s)\alpha(\omega_n s, s)x)\\
&=& ( \sigma^{2k}\omega_n s, \alpha(\sigma^{2k}\omega_n s, s)x) = (\sigma^{2k}\omega_n s, \alpha( \psi_n\omega_n  s, \omega_n  s) x).
\end{eqnarray*}
So the previous proposition implies $d_*(\Psi_n \circ \Omega_n(s, x), \sigma_X^{2k} \circ \Phi_n(s, x)) \le 1/(n-k)$.  So if $f$ is a continuous function on $V^\N \times X$ then the bounded convergence theorem implies
\begin{eqnarray*}
\lim_{n\to\infty} \|f\circ \Psi_n \circ \Omega_n- f\circ \sigma_X^{2k} \circ \Phi_n\|_1 &=&0\\
\lim_{n\to\infty} \|f\circ \Omega_n-f\|_1&=&0.
\end{eqnarray*}
It follows from the previous proposition that the operators $f \mapsto f \circ \Omega_n$, $f\mapsto f \circ \Phi_n$ and $f\mapsto f \circ \Psi_n$ are all bounded for $f \in L^1(V^\N \times X)$ with bound independent of $n$. It easy to see that $f \mapsto f\circ \sigma_X^{2k}$ is also a bounded operator on $L^1(V^\N \times X)$ (because $V$ is finite and $\tilde{\nu}$ is the Markov measure). Since the continuous functions are dense in $L^1(V^\N \times X)$, this implies the lemma.
\end{proof}

We can now prove Theorem \ref{thm:erg}.

\begin{proof}[Proof of Theorem \ref{thm:erg}]
Note $\cF^X_{2k} \supset \cF_{sync}^X$. So it suffices to show that if $f\in L^1(V^\N\times X)$ is $\cR^X_{sync}$-invariant then it is $\cR^X_{2k}$-invariant. %Define
%$$\sigma_X: V^\N \times X \to V^\N \times X, \quad \sigma_X(s,x) = (\sigma s, \alpha(\sigma s, s)x).$$
Because the map $\sigma^{2k}_X$ together with $\cR^X_{sync}$ generates $\cR^X_{2k}$, it suffices to show that  if $f \in L^1(V^\N\times X)$ is $\cR^X_{sync}$-invariant then $f \circ \sigma_X^{2k} = f$. 

 Let $\Phi_n, \Psi_n, \Omega_n$ ($n>2k+1$) be as in the previous lemma. Because $f$ is  $\cR^X_{sync}$-invariant and the graph of $\Psi_n$ is contained in $\cR_{sync}
^X$, it follows that $f\circ \Psi_n=f$ for all $n$. An easy exercise shows that $\sigma_X$ preserves the equivalence relation in the sense that
$$\Big( (s,x), (t,y) \Big) \in \cR_{sync} \Rightarrow \Big( \sigma_X(s,x), \sigma_X(t,y) \Big)\in \cR_{sync}.$$
It follows that $f\circ \sigma_X^{2k}$ is $\cR^X_{sync}$-invariant. Since the graph of $\Phi_n$ is contained in $\cR^X_{sync}$,  $f\circ\sigma_X^{2k}\circ \Phi_n = f\circ \sigma_X^{2k}$ for all $n$. We now have
\begin{eqnarray*}
\|f - f\circ\sigma_X^{2k}\|_1 &=& \|f - f\circ\sigma_X^{2k} \circ \Phi_n\|_1\\
&\le& \|f - f \circ \Psi_n \circ \Omega_n\|_1 +\|f \circ \Psi_n \circ \Omega_n-f\circ\sigma_X^{2k} \circ \Phi_n\|_1\\
& =& \| f-f\circ\Omega_n\|_1+\|f \circ \Psi_n \circ \Omega_n-f\circ\sigma_X^{2k} \circ \Phi_n\|_1.
\end{eqnarray*}
We take the limit as $n\to\infty$ (using the previous lemma) to obtain  $f=f\circ \sigma_X^{2k}$ as claimed.

\end{proof}

\section{Proof of Theorem \ref{thm:main}} \label{sec:main}

\begin{prop}\label{prop:kakutani}
Let $\Pi,V,\cL$ be as above. For each $v \in V$, let $\Gamma_v \le\FF$ be the subgroup generated by all elements of the form $\cL(p)$ where $pv$ is a directed path from $v$ to $v$ in $G$. If $\Gamma_v=\FF$ for some $v\in V$ and $G$ is strongly connected then $\cF_1^X$ is the $\sigma$-algebra generated by all sets of the form $V^\N \times A$ where $A \subset X$ is a measurable $\FF$-invariant set. In particular, if $\FF \cc (X,\mu)$ is ergodic then $\cF_1^X$ is trivial.
\end{prop}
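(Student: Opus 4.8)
The plan is to prove the two inclusions separately, with essentially all of the content in the reverse one. The easy inclusion is immediate: if $A\subset X$ is $\FF$-invariant and $\big((s,x),(t,y)\big)\in\cR_1^X$, then $y=T_{\alpha(t,s)}x$, so $x\in A\iff y\in A$; thus each $V^\N\times A$ is a union of $\cR_1^X$-classes and lies in $\cF_1^X$. For the reverse inclusion it suffices to show that every bounded $\cR_1^X$-invariant $f\in L^2(V^\N\times X)$ has the form $f(s,x)=h(x)$ with $h$ $\FF$-invariant, since applying this to indicators recovers the statement about sets.

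Write $f_s:=f(s,\cdot)$ and $U_g\phi:=\phi\circ T_g$. Since $\cR_1^X$ is generated by $\cR_{sync}^X$ and $\sigma_X$, invariance of $f$ is equivalent to $f_s=U_{\alpha(t,s)}f_t$ whenever $s\,\cR_{sync}\,t$, together with $f_s=U_{\cL(s_1)^{-1}}f_{\sigma s}$; iterating the latter gives $f_s=U_{\cL(s_{[1,n]})^{-1}}f_{\sigma^n s}$. Let $P$ denote the orthogonal projection of $L^2(X,\mu)$ onto the $\FF$-invariant functions. Because $PU_g=P=U_gP$, a short computation shows that the fiberwise projection $(Pf)(s,x):=(Pf_s)(x)$ is again $\cR_1^X$-invariant, hence so is $g:=f-Pf$, whose fibers satisfy $g_s\perp\{\FF\text{-invariant functions}\}$. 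The problem reduces to proving $g=0$.

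This is the main step and the principal obstacle. The tempting route---deducing ergodicity of the $\FF$-valued skew product from the fact that $\alpha$ generates $\FF$---is unavailable, because the induced random walk on the free group is transient, so the skew-product relation on $V^\N\times\FF$ is highly non-ergodic; the finiteness of $\mu$ must be used instead. Using $g_s=U_{\cL(s_{[1,n]})^{-1}}g_{\sigma^n s}$ I would pair $g$ against cylinder functions and then decouple past from future via the Markov property of $\tnu$: conditioning on $s_{n+1}=b$ renders the prefix $s_{[1,n]}$ independent of the future $\sigma^n s$. The future-factor then becomes $\E[g_{\sigma^n s}\mid s_{n+1}=b]$, a vector independent of $n$ (by stationarity) and orthogonal to the invariants, while the prefix-factor becomes an iterate of a single Markov contraction $M$ on $\bigoplus_{b\in V}L^2(X,\mu)$ assembled from the backward transition probabilities of $\tnu$ and the Koopman operators $U_{\cL(a)}$. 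As the pairing is independent of $n$, averaging over $n$ and invoking the mean ergodic theorem for the contraction $M$ replaces the prefix-factor by the orthogonal projection onto $\ker(I-M)$.

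It remains to identify $\ker(I-M)$, and here the hypotheses enter. If $M\Phi=\Phi$ with $\Phi=(\phi_b)_{b\in V}$, then $b\mapsto\|\phi_b\|^2$ is subharmonic for the backward chain, which is irreducible by strong connectivity and has stationary distribution $\nu$; hence it is constant, and the equality case of the triangle inequality in the Hilbert space $L^2(X,\mu)$ forces $U_{\cL(a)}\phi_a=\phi_b$ for every edge $(a,b)\in E$. Propagating this identity around any loop $p$ at $v$ yields $U_{\cL(p)}\phi_v=\phi_v$ for every loop label, so $\phi_v$ is fixed by $\Gamma_v=\FF$, and strong connectivity then makes every $\phi_b$ $\FF$-invariant. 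Thus $\ker(I-M)$ consists of $\FF$-invariant families, which are orthogonal to the future-factor; all pairings vanish and $g=0$. Finally, $f=Pf$ has $\FF$-invariant fibers, on which the cocycle acts trivially and $f_{\sigma s}=f_s$, so the map $s\mapsto f_s$ is $\cR_1$-invariant on the base; ergodicity of the Markov shift (strong connectivity again) forces it to be a.e.\ constant, giving $f(s,x)=h(x)$ with $h$ $\FF$-invariant, as required.
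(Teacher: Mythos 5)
Your proof is correct, but it takes a genuinely different route from the paper's. The paper first reduces to an ergodic action via ergodic decomposition, observes that $\cR_1^X$ is the orbit relation of $\sigma_X$ so that the claim amounts to ergodicity of $\sigma_X$, and then induces $\sigma_X$ on the cylinder $Y=\{s: s_1=v\}\times X$ where $\Gamma_v=\FF$: the induced transformation is a skew product driven by the loop words at $v$, its ergodicity is obtained by citing Kakutani's random ergodic theorem, and strong connectivity is then used twice to spread ergodicity from $Y$ to all of $V^\N\times X$. You instead run an operator-theoretic argument: split $f$ fiberwise into its projection onto the $\FF$-invariant functions plus an orthogonal part $g$, decouple past from future via the Markov property of $\tnu$ so that pairings of $g$ against cylinder functions are expressed through iterates of a single contraction $M$ on $L^2(V\times X,\nu\times\mu)$, apply the von Neumann mean ergodic theorem for contractions, and identify $\ker(I-M)$ by subharmonicity plus irreducibility, strict convexity of the Hilbert norm, and propagation around loops --- which is exactly where $\Gamma_v=\FF$ enters; strong connectivity appears both as irreducibility of the chain and as ergodicity of the base shift. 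I checked your sketched decoupling step: conditioning on $s_{n+1}=b$, the backward transitions of $\tnu$ are given by $\Pi$ itself, the prefix factor satisfies the recursion $(M\Phi)_b=\sum_c\Pi_{b,c}\,\phi_c\circ T_{\cL(c)}$, stationarity of $\nu$ makes $M$ a contraction, and the anti-homomorphism property of Koopman operators is precisely what makes the loop propagation produce $U_{\cL(p)}\phi_v=\phi_v$ for loop words $\cL(p)$ generating $\Gamma_v$; so the plan carries through as stated. As for what each approach buys: the paper's proof is shorter and softer, outsourcing the hard step to Kakutani's theorem as a black box; yours is self-contained modulo the mean ergodic theorem, avoids ergodic decomposition altogether, and your operator $M$ is essentially the Markov operator $\Pi_X$ that the paper introduces in \S\ref{sec:main}, so your route is stylistically closer to the classical Markov-operator (Guivarc'h--Bufetov) arguments and would integrate seamlessly with the rest of the proof of Theorem \ref{thm:main}.
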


\begin{proof}
By decomposing into ergodic components, we may assume that $\FF \cc (X,\mu)$ is ergodic. Because $\cR_1^X$ is generated by $\sigma_X$, it suffices to prove $\sigma_X$ is ergodic.

Let $Y \subset V^\N \times X$ be the set of all $(s,x)$ such that $s_1=v$ where $v\in V$ is chosen so that $\Gamma_v=\FF$. Let $T:Y \to Y$ be the induced transformation:
$$T(s,v)= \sigma_X^n(s,v)$$
where $n\ge 1$ is the smallest natural number such that $\sigma^n_X(s,v) \in Y$. By Kakutani's random ergodic theorem \cite[Theorem 3 (a) $\Rightarrow$ (f)]{Kakutani}, the ergodicity of $\FF \cc (X,\mu)$ implies $T$ is ergodic. 

Now suppose $Z \subset V^\N \times X$ is measurable, $\sigma_X$-invariant and has positive measure. Then $Y \cap Z$ is $T$-invariant. Because the graph $G$ is strongly connected, $\tilde{\nu}\times \mu(Y\cap Z)>0$. Since $T$ is ergodic, this implies $Y \cap Z =Y$ up to measure zero. However, $\cup_{i=0}^\infty \sigma_X^i Y = V^\N \times X$ (up to measure zero) because $G$ is strongly connected. This implies $Z$ is conull and therefore $\sigma_X$ is ergodic as claimed.
\end{proof}

\begin{lem}\label{lem:period}
For any $f\in L^1(V^\N\times X)$ and any $k \in \N$,
$$\frac{1}{k}\sum_{i=0}^{k-1} \EE[f  \circ \sigma^i_X |\cF^X_k] = \EE[f |\cF^X_1].$$
\end{lem}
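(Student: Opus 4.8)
The plan is to set $L := \frac{1}{k}\sum_{i=0}^{k-1}\EE[f\circ\sigma_X^i\,|\,\cF_k^X]$ and prove the two facts that together characterize $\EE[f\,|\,\cF_1^X]$: that $\EE[L\,|\,\cF_1^X]=\EE[f\,|\,\cF_1^X]$, and that $L$ is itself $\cF_1^X$-measurable. The first fact is essentially free. Since $\cR_k\subset\cR_1$ we have the inclusion $\cF_1^X\subseteq\cF_k^X$, so the tower property gives $\EE[L\,|\,\cF_1^X]=\frac1k\sum_{i=0}^{k-1}\EE[f\circ\sigma_X^i\,|\,\cF_1^X]$. Now $\sigma_X$ preserves $\tnu\times\mu$ (the shift preserves the stationary Markov measure $\tnu$, and the $\FF$-action preserves $\mu$), while $\cF_1^X$ is exactly its invariant $\sigma$-algebra because $\cR_1^X$ is generated by $\sigma_X$; testing against $\sigma_X$-invariant functions shows each summand equals $\EE[f\,|\,\cF_1^X]$. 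Hence $\EE[L\,|\,\cF_1^X]=\EE[f\,|\,\cF_1^X]$, and the lemma reduces to showing $L$ is $\cF_1^X$-measurable, i.e. that $L\circ\sigma_X=L$.

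To establish the $\sigma_X$-invariance of $L$ I would use two ingredients. First, the commutation relation $\EE[g\,|\,\cF_k^X]\circ\sigma_X=\EE[g\circ\sigma_X\,|\,\cF_k^X]$ for every $g\in L^1$. Second, the identity $\EE[f\circ\sigma_X^k\,|\,\cF_k^X]=\EE[f\,|\,\cF_k^X]$, which holds because every $\cF_k^X$-measurable function is $\sigma_X^k$-invariant (the orbit relation of $\sigma^k$, hence of $\sigma_X^k$, is contained in $\cR_k^X$) and $\sigma_X^k$ preserves $\tnu\times\mu$. Granting these, a telescoping computation gives
\[
L\circ\sigma_X=\frac1k\sum_{i=0}^{k-1}\EE[f\circ\sigma_X^{i+1}\,|\,\cF_k^X]=L+\frac1k\Big(\EE[f\circ\sigma_X^k\,|\,\cF_k^X]-\EE[f\,|\,\cF_k^X]\Big)=L,
\]
so $L$ is $\sigma_X$-invariant and therefore $\cF_1^X$-measurable, which finishes the argument.

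The main obstacle is the commutation relation, and precisely the fact that $\sigma_X$ is \emph{not} invertible (the shift on $V^\N$ is one-sided). The commutation identity is the usual consequence of the defining property of conditional expectation once one knows the equality of $\sigma$-algebras $\sigma_X^{-1}\cF_k^X=\cF_k^X$ (mod null), so the real content is this equality. I would deduce it from the fact that $\sigma_X$ preserves $\cR_k^X$ \emph{in both directions}: $\big((s,x),(t,y)\big)\in\cR_k^X$ if and only if $\big(\sigma_X(s,x),\sigma_X(t,y)\big)\in\cR_k^X$. The combinatorial half, $s\,\cR_k\,t\iff\sigma s\,\cR_k\,\sigma t$, is immediate from the definition of $\cR_k$, and the cocycle half follows from $\alpha(\sigma t,\sigma s)=\alpha(\sigma t,t)\,\alpha(t,s)\,\alpha(s,\sigma s)=t_1^{-1}\,\alpha(t,s)\,s_1$, which converts $\alpha(t,s)x=y$ into $\alpha(\sigma t,\sigma s)(s_1^{-1}x)=t_1^{-1}y$. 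Bidirectional invariance forces every $A\in\cF_k^X$ to be $\sigma_X$-saturated: if $\sigma_X(s',x')=\sigma_X(s,x)$ then the two points are $\cR_k^X$-equivalent (apply the reverse direction to the reflexive pair at their common image), so they belong to $A$ together. Thus $A=\sigma_X^{-1}(\sigma_X A)$ with $\sigma_X A\in\cF_k^X$, giving $\cF_k^X\subseteq\sigma_X^{-1}\cF_k^X$; the reverse inclusion is the easy direction. With $\sigma_X^{-1}\cF_k^X=\cF_k^X$ in hand, the commutation relation and hence the lemma follow.
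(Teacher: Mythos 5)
Your proof is correct, but it takes a genuinely different route from the paper's. The paper deduces the lemma from the mean ergodic theorem: since $\cF_1^X$ is the $\sigma_X$-invariant sigma-algebra, $\frac{1}{nk}\sum_{i=0}^{nk-1} f\circ\sigma_X^i \to \EE[f\,|\,\cF_1^X]$ in $L^1$; applying the $L^1$-contraction $\EE[\,\cdot\,|\,\cF_k^X]$ to both sides and then invoking the same periodicity you use, $\EE[f\circ\sigma_X^{k+i}\,|\,\cF_k^X]=\EE[f\circ\sigma_X^i\,|\,\cF_k^X]$, the conditioned averages are constant in $n$ and equal to $\frac{1}{k}\sum_{i=0}^{k-1}\EE[f\circ\sigma_X^i\,|\,\cF_k^X]$, hence equal to the limit. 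That argument never commutes a conditional expectation past $\sigma_X$, so it entirely sidesteps the non-invertibility problem that occupies most of your write-up; this is exactly what the ergodic theorem buys. What your approach buys is elementarity and extra structural information: you avoid the ergodic theorem altogether, and you establish along the way that $\cR_k^X$ is $\sigma_X$-invariant in both directions and hence that $\sigma_X^{-1}\cF_k^X=\cF_k^X$ (mod null) --- a fact of independent interest --- together with the direct conclusion that the averaged conditional expectation is itself $\cF_1^X$-measurable. Two steps you assert should be written out, though both follow from observations you already make: (i) $\sigma_X A\in\cF_k^X$ for $A\in\cF_k^X$ needs its own short argument --- saturation of $\sigma_X A$ follows because any two $\sigma_X$-preimages of a single point are $\cR_k^X$-equivalent (your reflexive-pair trick again), and measurability follows because $\sigma_X$ is $|V|$-to-one, so $\sigma_X A=\bigcup_{v\in V} j_v^{-1}(A)$ where $j_v(t,y)=(vt,\cL(v)y)$ are the Borel branch maps; (ii) the telescoping identity gives $L\circ\sigma_X=L$ only almost everywhere, so to conclude $\cF_1^X$-measurability you should cite the standard fact that an a.e.\ invariant function of a measure-preserving map agrees a.e.\ with a strictly invariant one. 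Both additions are routine, so your argument is complete once they are included.
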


\begin{proof}
Because $\cF^X_1$ is the sigma-algebra of $\sigma_X$-invariant measurable subsets, von Neumann's mean ergodic theorem implies that
$$ \frac{1}{nk} \sum_{i=0}^{nk-1} f \circ \sigma^i_X \to \EE[f | \cF^X_1]$$
in $L^1$ as $n \to \infty$. By taking conditional expectations on both sides (and remembering that $\cF^X_1 \subset \cF^X_k$), we have 
$$ \frac{1}{nk} \sum_{i=0}^{nk-1} \EE[f \circ \sigma^i_X|\cF^X_k] \to \EE[f | \cF^X_1].$$
Because $\cF^X_k$ is $\sigma_X^k$-invariant, we have $\EE[f \circ \sigma^{k+i}_X|\cF^X_k] =\EE[f \circ \sigma^i_X|\cF^X_k]$ for any $i$. So for any $n$
$$\frac{1}{nk} \sum_{i=0}^{nk-1} \EE[f \circ \sigma^i_X|\cF^X_k] =\frac{1}{k} \sum_{i=0}^{k-1} \EE[f \circ \sigma^i_X|\cF^X_k].$$
This implies the lemma.
\end{proof}

%\begin{lem}\label{lem:period2}
%Let $\pi:V^\N \times X \to V \times X$ be the projection map $\pi(s,x)=(s_1,x)$. For any $f\in L^1(V\times X)$ and any $0\le i \le k-1$,
%$$\E[ (\Pi_X^if) \circ \pi | \cF^X_k] = \E[ f \circ \pi \sigma^{k-i}_X | \cF^X_k].$$
%\end{lem}

%\begin{proof}

%\end{proof}

\begin{proof}[Proof of Theorem \ref{thm:main} from Theorem \ref{thm:erg}]
Without loss of generality, we may assume $\FF \cc (X,\mu)$ is ergodic. Let $\pi:V^\N \times X \to V \times X$ denote the projection map $\pi(s,x)=(s_1,x)$. 

%For any function $\psi$ on $V\times X$ we define the function $\psi^0$ on $V^\N \times X$ by $\psi^0(s,x)=\psi(s_0,x)$. 

Let $\cB_{V\times X}$ denote the Borel sigma-algebra on $V\times X$ and let $\cF^X_{\ge n}$ be the smallest sigma-algebra of $V^\N \times X$ containing $(\pi \circ \sigma_X^m)^{-1}(\cB_{V\times X})$ for every $m\ge n$.

Consider the induced Markov operator $\Pi_X:L^1(V\times X) \to L^1(V\times X)$ given by
$$\Pi_X(\varphi)(x,v) = \sum_{w\in V} \Pi_{w,v}\varphi(w,T_v x).$$
Observe that for $n\ge 2$
\begin{eqnarray*}
\Pi^n_X(\varphi)(x,v) &=&\sum_{t_1,\ldots, t_n \in V} \Pi_{(t_1,\ldots, t_n,v)} \varphi( t_1, T_{(t_2,\ldots, t_n,v)}x).
%&=& \sum_{t_1,\ldots, t_n \in V} \Pi_{v,t_n} \Pi_{t_nt_{n-1}}\cdots \Pi_{t_2t_1}  \varphi( t_1, T_{(t_2,\ldots, t_n,v)}x).
\end{eqnarray*}
Thus
\begin{eqnarray*}
(\Pi^n_X \varphi)\circ \pi \circ \sigma_X^n)(s,x) &=& (\Pi^n_X\varphi)( s_{n+1}, T_{(s_1,\ldots, s_n)}^{-1}x) \\
&=& \sum_{t_1,\ldots, t_n \in V} \Pi_{(t_1,\ldots, t_n, s_{n+1})} \varphi( t_1, T_{(t_2, \ldots, t_n,s_{n+1})}T_{(s_1,\ldots, s_n)}^{-1}x) \\
&=& \EE[\varphi  \pi |\cF^X_{\ge n+1}](s,x). 
\end{eqnarray*}
The reverse martingale convergence theorem yields
$$\EE[\varphi  \pi |\cF^X_{\ge n+1}] \to \EE[\varphi \pi | \cF^X_{sync}]$$
in $L^1(V^\N \times X)$ as $n\to\infty$.  By Theorem \ref{thm:erg}, $\cF^X_{sync} = \cF^X_{2k}$. Therefore, 
$$(\Pi^n_X \varphi)\circ \pi \circ \sigma_X^n \to \EE[\varphi \pi | \cF^X_{2k}]$$
in $L^1(V^\N \times X)$ as $n\to\infty$. Because conditioning on $\cF^X_{2k}$ commutes with $\sigma_X$, for any $i \ge 0$
$$(\Pi^n_X \varphi)\circ \pi \circ \sigma_X^{n+i} \to \EE[\varphi \pi \circ \sigma_X^i| \cF^X_{2k}]$$
in $L^1(V^\N \times X)$ as $n\to\infty$. Since $\EE[\varphi \pi \circ \sigma_X^i| \cF^X_{2k}] = \EE[\varphi \pi \circ \sigma_X^{2k+i}| \cF^X_{2k}]$ we can also write this as: for any $0\le i < 2k$,
$$(\Pi^n_X \varphi)\circ \pi \circ \sigma_X^{n-i} \to \EE[\varphi \pi \circ \sigma_X^{2k-i}| \cF^X_{2k}]$$
in $L^1(V^\N \times X)$ as $n\to\infty$. Now Lemma \ref{lem:period} and Proposition \ref{prop:kakutani} imply
\begin{eqnarray*}
 \frac{1}{2k}\sum_{i=0}^{2k-1} (\Pi^n_X \varphi)\circ \pi \circ \sigma_X^{n-i}
 &\to & \frac{1}{2k}\sum_{i=0}^{2k-1} \EE[\varphi \pi \circ \sigma_X^i | \cF^X_{2k}] = \EE[\varphi \pi | \cF^X_{1}] =  \int \varphi~d\nu \times \mu
\end{eqnarray*}
in $L^1$ as $n\to\infty$.  However,
$$(\Pi^n_X \varphi)\circ \pi \circ \sigma_X^{n-i} = (\Pi^{n-i}_X \Pi^i_X\varphi)\circ \pi \circ \sigma_X^{n-i} \to \EE[\Pi^i_X\varphi \pi | \cF^X_{2k}]$$
in $L^1(V^\N \times X)$ as $n\to\infty$. Similarly, 
$$(\Pi^{n+i}_X \varphi)\circ \pi \circ \sigma^n_X \to \Pi^i_X\Big(\EE[\varphi \pi | \cF^X_{2k}]\Big)= \EE[\Pi^i_X\varphi \pi | \cF^X_{2k}]$$
in $L^1(V^\N \times X)$ as $n\to\infty$. So we have
$$ \frac{1}{2k}\sum_{i=0}^{2k-1} (\Pi^{n+i}_X \varphi)\circ \pi \circ \sigma^n_X \to \int \varphi~d\nu \times \mu$$
in $L^1$ as $n\to\infty$.

Without loss of generality, we may assume $\int \varphi~d\nu \times \mu = 0$ in which case the above implies
\begin{eqnarray*}
\left\|\frac{1}{2k}\sum_{i=0}^{2k-1} (\Pi^{n+i}_X \varphi)\circ \pi \circ \sigma^n_X \right\| \to 0
\end{eqnarray*}
as $n\to\infty$. However,
\begin{eqnarray*}
\left\| \frac{1}{2k}\sum_{i=0}^{2k-1} (\Pi^{n+i}_X \varphi)\circ \pi \circ \sigma^n_X \right\| &=&\left\| \frac{1}{2k}\sum_{i=0}^{2k-1} \Pi^{n+i}_X \varphi \right\|.
\end{eqnarray*}
So
$$\frac{1}{2k}\sum_{i=0}^{2k-1} \Pi^{n+i}_X \varphi \to 0$$
in $L^1$ as $n\to\infty$. Next we note that if $\varphi(v,x)=\phi(x)$ for some $\phi \in L^1(X)$ then by a change of variables argument
\begin{eqnarray*}
(S_n \phi)(x) &=& \sum_{s_1,\ldots, s_n \in V} \nu(s_n) \Pi_{(s_1,\ldots, s_n)}  \phi( T_{(s_1,\ldots, s_n)} x)\\
&=&   \sum_{v\in V} \sum_{s_1,\ldots, s_{n-1} \in V} \nu(v) \Pi_{(s_1,\ldots, s_{n-1},v)}   \phi( T_{(s_2,\ldots, s_{n-1},v)} x) \\
&=& \sum_{v\in V} \nu(v) (\Pi^{n-1}_X \varphi)(v,x).
\end{eqnarray*}
Thus $S_n \phi$ converges to $0$ in $L^1$ as $n\to\infty$.

\end{proof}


\begin{thebibliography}{99}

\bibitem{ArnKr}
{V.\ I.\ Arnold} and {A.\ L.\  Krylov}, Equidistribution of points on a sphere
and ergodic properties of solutions of ordinary differential equations
in a complex domain, \textit{Dokl.\ Akad.\ Nauk SSSR} \textbf{148} (1963),
9--12.

%\bibitem{benoistquint} Y. Benoist, J.-F. Quint,
%Mesures stationnaires et ferm{\'e}s invariants des espaces homog{\`e}nes.
%C. R. Math. Acad. Sci. Paris 347 (2009), no. 1--2,  9--13.

\bibitem{birman-series} J. Birman and C. Series. DehnÕs algorithm revisited, with application to simple curves on surfaces. {\it Combinatorial Group Theory and Topology}, S. Gersten and J. Stallings eds., Ann. of Math. Studies III, Princeton U.P., (1987), 451--478.

\bibitem{Bowen} L.~Bowen,
Invariant measures on the space of horofunctions of a word hyperbolic group,
\textit{Ergodic Theory Dynam. Systems} \textbf{30} (2010), no. 1, 97--129.

\bibitem{BowNe4953} L.~Bowen and A.~Nevo, Geometric covering arguments and ergodic
theorems for free groups, {\it LÕEnseignement Math\'ematique}, Volume {\bf 59}, Issue 1/2, 2013, pp. 133--164


\bibitem{BowNe4426} L.~Bowen and A.~Nevo,
Amenable equivalence relations and the construction of ergodic averages for group actions, to appear in {\it Journal d'Analyse Math\'ematique}.

\bibitem{BowNe4109} L.~Bowen and A.~Nevo,
von-Neumann and Birkhoff ergodic theorems for negatively curved groups,
to appear in {\it Annales scientifiques de l'\'Ecole normale sup\'erieure}.


\bibitem{BowNe3569} L.~Bowen and A.~Nevo,
A horospherical ratio ergodic theorem for actions of free groups,
{\it Groups Geom. Dyn}. {\bf 8}(2):331--353, 2014.

\bibitem{bowen-series} R. Bowen and C. Series. Markov maps associated with Fuchsian groups. {\it IHES Publications}, {\bf 50}, (1979), 153--170.

\bibitem{Buf99}
{A. I.~Bufetov}, Ergodic theorems for actions of several mappings,
(Russian) Uspekhi Mat. Nauk, \textbf{54} (1999), no. 4 (328), 159--160,
translation in Russian Math. Surveys, \textbf{54} (1999), no. 4, 835--836.

\bibitem{Buf00}
{A. I.~Bufetov}, Operator ergodic theorems for actions of
free semigroups and groups, \textit{Funct.\ Anal.\  Appl\/}.
\textbf{34} (2000),  239--251.

\bibitem{Buf01}
A. I.~Bufetov, Markov averaging and ergodic theorems for several operators, in
\textit{Topology\/}, \textit{Ergodic Theory\/},
\textit{and Algebraic Geometry\/},
\textit{AMS Transl\/}.\ {\bf 202} (2001), 39--50.

\bibitem{Buf02}A. I.~Bufetov,
Convergence of spherical averages for actions of free groups.
\textit{Ann. Math.}, \textbf{155} (2002), 929--944.

\bibitem{BKK}
A. I. Bufetov, M. Khristoforov, A. Klimenko, Ces\`aro convergence of spherical averages for measure-preserving actions of Markov semigroups and groups, {\it Int. Math. Res. Not. IMRN}, 2012:21 (2012), 4797--4829.


\bibitem{BufSer} A. I.~Bufetov, C.~Series, A pointwise ergodic theorem
for Fuchsian groups, arXiv:1010.3362v1 [math.DS].


\bibitem{CalFuji}
D. Calegari,  K. Fujiwara, Combable functions, quasimorphisms, and the central limit theorem.
Ergodic Theory Dynam. Systems 30 (2010), no. 5, 1343--1369.


\bibitem{Cannon} J. Cannon, The combinatorial structure of cocompact discrete
hyperbolic groups. \textit{Geom. Dedicata}, \textbf{16} (1984), no. 2, 123--148.

\bibitem{FuNe}{K.\ Fujiwara} and {A.\ Nevo}, Maximal and pointwise
 ergodic theorems for word-hyperbolic groups,
\textit{Ergodic Theory  Dynam.\ Systems\/} \textbf{18} (1998), 843--858.

\bibitem{GhysDelaharpe}
Sur les groupes hyperboliques d'apr\`es Mikhael Gromov.
Papers from the Swiss Seminar on Hyperbolic Groups held in Bern, 1988.
Edited by \'E.~Ghys and P.~de la Harpe. Progress in Mathematics, 83.
Birkh\"auser Boston, Inc., Boston, MA, 1990.

\bibitem{Grig86}
{R.\ I.\ Grigorchuk}, Pointwise ergodic theorems for actions of free groups,
\textit{Proc.\ Tambov Workshop in the Theory of Functions\/}, 1986.

\bibitem{Grig99}
{R.\ I.\ Grigorchuk}, Ergodic theorems for actions of free semigroups
and groups, \textit{Math.\ Notes\/}, \textbf{65} (1999),  654--657.

\bibitem{Grig00} R. I.~Grigorchuk. An ergodic theorem for actions of
a free semigroup. (Russian) \textit{Tr. Mat. Inst. Steklova} \textbf{231} (2000),
Din. Sist., Avtom. i Beskon. Gruppy, 119--133;
translation in \textit{Proc. Steklov Inst. Math.} 2000, no. 4 (231), 113--127.

\bibitem{Gromov} {M. Gromov}, Hyperbolic groups, in \textit{Essays in Group
Theory\/},
\textit{MSRI Publ\/}.\ {\bf 8} (1987), 75--263, Springer-Verlag, New York.

\bibitem{Guivarch}
{Y.\ Guivarc'h}, G\'en\'eralisation d'un th\'eor\`eme de von Neumann,
\textit{C.\ R.\ Acad.\ Sci.\ Paris S\'er. A--B\/} {\bf 268} (1969), 1020--1023.

%\bibitem{kaim} V.A. Kaimanovich,  The Poisson formula for groups with hyperbolic properties. Ann. of Math. (2) 152 (2000), no. 3, 659--692.

\bibitem{Kakutani} S. Kakutani, Random ergodic theorems and Markoff processes with a stable distribution, {\it Proceedings of the Second Berkeley Symposium on Mathematical Statistics and Probability, 1950}, University of California Press, Berkeley and Los Angeles (1951), 247--261.

\bibitem{Nevo94} A.~Nevo, Harmonic analysis and pointwise ergodic theorems
for noncommuting transformations, \textit{J. Amer. Math. Soc.} \textbf{7}
(1994), no. 4, 875--902.

\bibitem{Nevo06}
A.~Nevo, Pointwise ergodic theorems for actions of groups, in
\textit{Handbook of dynamical systems, Vol. 1B}, 871--982, Elsevier B. V.,
Amsterdam, 2006.

\bibitem{NeSt}
{A.\ Nevo} and {E.\ M.\ Stein},
A generalization of Birkhoff's pointwise ergodic theorem,
\textit{Acta Math\/}.\ \textbf{173} (1994), 135--154.

\bibitem{Ornst}
{D.\ Ornstein}, On the pointwise behavior of iterates
of a self-adjoint operator, \textit{J.\ Math.\ Mech\/}.\ \textbf{18}
(1968/1969) 473--477.

\bibitem{Oseled}
{V.\ I.\ Oseledets}, Markov chains, skew-products, and ergodic theorems
for general  dynamical systems,
\textit{Th.\ Prob.\ App.\/} \textbf{10} (1965), 551--557.

\bibitem{Rota}
{G.-C. Rota}, An ``Alternierende Verfahren'' for general positive operators,
\textit{Bull.\ A.\ M.\ S\/}.\  \textbf{68} (1962), 95--102.

\bibitem{PS} Mark Pollicott and Richard Sharp, 
Ergodic theorems for actions of hyperbolic groups,
 {\it Proc. Amer. Math. Soc}. {\bf 141} (2013), 1749--1757.
 
\bibitem{series} C. Series. Geometrical methods of symbolic coding. In {\it Ergodic Theory and Symbolic Dynamics in
Hyperbolic Spaces}, T. Bedford, M. Keane and C. Series eds., Oxford Univ. Press, (1991).



\end{thebibliography}
\end{document}